\numberwithin{equation}{section}
\newcommand{\Z}{\mathbb{Z}}
\newcommand{\Q}{\mathbb{Q}}
\newcommand\Gal{\mathrm{Gal}}
\newcommand\ord{\mathrm{ord}}
\newtheorem{lemma}{Lemma}[section]
\newtheorem{theorem}[lemma]{Theorem}
\newtheorem{proposition}[lemma]{Proposition}
\newtheorem{mydef}[lemma]{Definition}
\newtheorem{conjecture}[lemma]{Conjecture}
\newtheorem{remark}[lemma]{Remark}
\title{\vspace{-\baselineskip}\sffamily\bfseries Malle's conjecture for fair counting functions}
\author[1]{Peter Koymans\thanks{Institute for Theoretical Studies, ETH Zurich, 8092 Zurich, Switzerland, peter.koymans@eth-its.ethz.ch}}
\author[2]{Carlo Pagano\thanks{Department of Mathematics and Statistics, Montreal, Quebec H3G 1M8, Canada, carlein90@gmail.com}}
\affil[1]{ETH Zurich}
\affil[2]{Concordia University}
\date{\today}
\begin{document}
\maketitle

\begin{abstract}
We show that the naive adaptation of Malle's conjecture to fair counting functions is not true in general.
\end{abstract}

\section{Introduction}
\subsection{Malle's conjecture}
Number field counting has a rich history in number theory, going back to at least Gauss counting quadratic extensions of $\Q$. The leading conjecture in this field is due to Malle.

\begin{conjecture}[\cite{Malle2}]
\label{cMalleDisc}
Let $G$ be a non-trivial finite group and let $k$ be a number field. Then there exists an integer $a(G) \geq 1$, an integer $b(G, k) \geq 0$ and a real number $c(G, k) > 0$ with
\begin{align}
\label{eMalleDisc}
|\{K/k : \Gal(K/k) \cong G, N_{k/\Q}(\textup{Disc}(K/k)) \leq X\}| \sim c(G, k) X^{1/a(G)} (\log X)^{b(G, k)}.
\end{align}
If we let $\textup{Cl}(g)$ be the conjugacy class of an element $g \in G - \{\textup{id}\}$ and if we write $g \sim h$ if $\textup{Cl}(g)$ and $\textup{Cl}(h)$ are equivalent under the cyclotomic action of $k$, then we have 
\begin{align}
\label{eMalleDiscb}
b(G, k) = -1 + n, 
\end{align}
where $n$ is the number of equivalence classes of $G - \{\textup{id}\}$ under $\sim$ consisting entirely of elements with minimal order in $G$.
\end{conjecture}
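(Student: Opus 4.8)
\emph{This is Malle's conjecture, so no proof is known; the following is the heuristic that produces the stated exponents, together with an indication of where it would have to be upgraded to a theorem.}

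The plan is to carry out Malle's own heuristic and then ask what it would take to make each of its probabilistic inputs rigorous. Since $\Gal(K/k) \cong G$ forces $K/k$ to be Galois of degree $|G|$, I would first replace fields $K$ by surjections $\rho \colon \Gal(\overline{k}/k) \twoheadrightarrow G$, counted up to the action of $\Aut(G)$, and record that $N_{k/\Q}(\textup{Disc}(K/k)) = \prod_{\pp} N\pp^{\, d_{\pp}(\rho)}$ (with $K$ the fixed field of $\ker\rho$), the product running over primes $\pp$ of $k$ and $d_{\pp}(\rho)$ the local discriminant exponent of $\rho$. If $\pp$ is tamely ramified with inertia generated by the class of $g \in G$, the conductor--discriminant formula applied to the regular representation gives $d_{\pp}(\rho) = |G| - |G|/\ord(g)$, i.e. the index $\mathrm{ind}(g)$ of $g$; wildly ramified primes must divide $|G|$, so there are only finitely many of them and they contribute a bounded factor. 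In particular the minimal index of a non-identity element is $a(G) = |G|(1 - 1/p)$ with $p$ the least prime dividing $|G|$, attained exactly on the elements of minimal order $p$.

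Next I would pass to the Dirichlet series $F(s) = \sum_{\rho} \prod_{\pp} N\pp^{-s\, d_{\pp}(\rho)}$ and model it, under the heuristic that the local conditions at distinct primes are independent, by an Euler product $\prod_{\pp}\bigl(1 + \sum_{c} m_{\pp}(c)\, N\pp^{-s\,\mathrm{ind}(c)} + \cdots \bigr)$, where $c$ ranges over inertia classes and $m_{\pp}(c)$ is the (heuristic) mass of $\rho$ with inertia class $c$ at $\pp$. As $s \to 1/a(G)$ the only primes that matter are those carrying a class of index exactly $a(G)$, that is, a class of minimal-order elements, and each contributes a factor comparable to $(1 - N\pp^{-a(G)s})^{-1}$ up to a constant. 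The key arithmetic point is that the inertia class realisable at a given prime is twisted by the action of Frobenius on the relevant roots of unity over $k$ --- precisely the cyclotomic action encoded by $\sim$ --- so the number of asymptotically independent such factors equals the number $n$ of $\sim$-classes of minimal-order elements. Comparing with $\zeta_k(a(G)s)^{n}$ and invoking a Tauberian theorem then yields a pole of order $n$ at $s = 1/a(G)$, hence $|\{\dots\}| \sim c(G,k)\, X^{1/a(G)} (\log X)^{\,n-1}$, i.e. $b(G,k) = -1 + n$.

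The genuine difficulty is of course to justify the Euler-product approximation: one needs (i) a quantitative supply of $\rho$ realising prescribed inertia classes at a positive-density set of primes --- a quantitative inverse Galois problem with local conditions, available from class field theory when $G$ is abelian (Wright) and from explicit parametrizations in small degree (Davenport--Heilbronn, Bhargava, and successors), but open in general; (ii) a matching upper bound showing that $\rho$ factoring through proper subgroups, or acquiring extra ramification, contribute a lower-order term; and (iii) uniform control of the Tauberian error term. I expect (i) and (ii) to absorb essentially all of the work. Finally, one must beware that the independence hypothesis can simply be false --- Kl\"uners' analysis of $C_3 \wr C_2$ shows that a hidden cyclotomic subfield can force extra logarithmic factors, so the prediction \eqref{eMalleDiscb} for $b(G,k)$ is already known to be incorrect in general; the present paper shows that, for ``fair'' counting functions in place of $\textup{Disc}$, the naive analogue of the whole conjecture fails even more dramatically, which is exactly why the heuristic above cannot be expected to hold verbatim.
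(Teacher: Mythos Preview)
The statement is a \emph{conjecture}, not a theorem, and the paper does not attempt to prove it; it is quoted from \cite{Malle2} as background, and indeed the surrounding text immediately notes that it is known to be false in general (Kl\"uners). You correctly identify this and offer only the standard heuristic behind the exponents rather than a proof, which is the appropriate response here. There is nothing in the paper to compare against: the paper provides no heuristic derivation of \eqref{eMalleDisc}--\eqref{eMalleDiscb}, so your sketch of the Euler-product/Tauberian reasoning is additional exposition rather than a parallel to anything the authors wrote.

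One small caveat: your computation $a(G) = |G|(1 - 1/p)$ presupposes the regular representation, whereas Malle's conjecture as usually stated (and as the paper leaves it) allows an arbitrary faithful transitive permutation representation of $G$, in which case $a(G)$ is the minimal index $n - \#\{\text{orbits of }\langle g\rangle\}$ over $g \neq \mathrm{id}$. This does not affect the structure of the heuristic, but it is worth flagging since the paper later makes a point of distinguishing the regular-representation case.
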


Although this conjecture has been exceptionally influential, numerous issues have come to light. One problem with Malle's conjecture is that it is not correct with the first counterexample due to Kl\"uners \cite{KlunersCounter}. However, there are also other undesirable features that are inherently tied with counting by discriminant that we will now discuss.

One desirable feature of a counting function is that the leading constant $c(G, k)$ is an Euler product. These type of leading constants are frequent in rational points counting, and suggest a good compatibility between global and local behavior. In the case of $S_n$, the leading constant $c(G, k)$ is conjectured to be an Euler product \cite{BhargavaMass}, in which case we say that the \emph{Malle--Bhargava} principle holds. It is however not always the case that the leading constant is an Euler product. This problem already manifests itself when dealing with quartic $D_4$-extensions, see Cohen--Diaz y Diaz--Olivier \cite{CDO} and Altug--Shankar--Varma--Wilson \cite{ASVW}. In general, it is still unclear when we expect the Malle--Bhargava principle to hold true.

Another related undesirable feature of discriminant counting is the \emph{subfield problem}. When counting by discriminant, it may happen that a positive proportion of the fields counted share a common subfield. This already happens for quartic $D_4$-extensions, and is the main underlying cause for the failure of the leading constant to be an Euler product.

\subsection{Fair counting functions}
The above reasons have led to an increasing interest in \emph{fair counting functions}, first introduced by Wood \cite{Wood}. We shall restrict ourselves to the product of ramified primes, which is the most prominent fair counting function. We consider the following naive modification of Malle's conjecture. Write $\mathfrak{f}(K/k)$ for the product of primes of $k$ that ramify in $K$.

\begin{conjecture}[Folklore adaptation of Malle's conjecture]
\label{cMalle}
Let $G$ be a non-trivial finite group and let $k$ be a number field. Then there exists an integer $b(G, k) \geq 0$ and a real number $c(G, k) > 0$ such that
\begin{align}
\label{eMalle}
|\{K/k : \Gal(K/k) \cong G, N_{k/\Q}(\mathfrak{f}(K/k)) \leq X\}| \sim c(G, k) X (\log X)^{b(G, k)}.
\end{align}
If we let $\textup{Cl}(g)$ be the conjugacy class of an element $g \in G - \{\textup{id}\}$ and if we write $g \sim h$ if $\textup{Cl}(g)$ and $\textup{Cl}(h)$ are equivalent under the cyclotomic action of $k$, then we have 
\begin{align}
\label{eMalleb}
b(G, k) = -1 + n, 
\end{align}
where $n$ is the number of equivalence classes of $G - \{\textup{id}\}$ under $\sim$.
\end{conjecture}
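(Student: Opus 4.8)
Since Conjecture~\ref{cMalle} is quantified over all pairs $(G,k)$, it suffices to produce one pair for which \eqref{eMalle}--\eqref{eMalleb} fails; write $N(X)$ for the counting function in \eqref{eMalle}. My first step would be to identify the predicted exponent: $b=-1+n$ is, up to the trivial unit, the order of the pole at $s=1$ of the Malle--Bhargava Euler product $\prod_{\pp}L_\pp(s)$ attached to $\ff$, where for $\pp\nmid|G|$ one has $L_\pp(s)=c_0(\pp)+c_1(\pp)\,\mathrm{N}\pp^{-s}+\cdots$ with $c_0(\pp),c_1(\pp)$ the numbers of unramified, respectively ramified, local $G$-extensions of $k_\pp$. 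One checks $c_1(\pp)/c_0(\pp)=\#\{\text{nontrivial conjugacy classes }C\text{ of }G:\ C^{(\mathrm{N}\pp)}=C\}$, and averaging this over $\mathrm{Frob}_\pp$ via Chebotarev and the orbit-counting lemma returns exactly the number $n$ of cyclotomic equivalence classes over $k$. Thus Conjecture~\ref{cMalle} is \emph{equivalent} to the assertion that, for every $(G,k)$, the naive Euler product computes the true power of $\log$ of $\sum_{K}\ff(K/k)^{-s}$ at $s=1$; to disprove it, one must exhibit $(G,k)$ for which a genuinely global input — invisible to the individual local factors $L_\pp$ — changes the count.

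The mechanism I would exploit is that $\ff$ is very coarse (it records only the \emph{set} of ramified primes), so I would take $G=N\rtimes H$ with $H$ abelian and $N$ of exponent $m$, so that a $G$-extension $K/k$ sits in a tower $k\subset L\subset K$ with $L/k$ an $H$-extension and $K/L$ a twisted $N$-extension; then the ramified primes of $K/k$ are those of $L/k$ together with the primes of $k$ below $\mathrm{ram}(K/L)$. Writing $R_L(Y)$ for the number of twisted $N$-extensions $M/L$ for which the product of primes of $k$ below $\mathrm{ram}(M/L)$ is at most $Y$, one obtains
$N(X)=\sum_{L}\tfrac{1}{|\mathrm{Aut}|}\,R_L\!\big(X/\mathrm{rad}(\mathrm{Disc}(L/k))\big)+(\text{thin terms})$, with $R_L(Y)\sim c_L\,Y(\log Y)^{e(L)}$. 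The plan is to choose $(N,H,k)$ so that $e(L)$ equals a generic value $e_0$ over which the Euler product is correct — summing the generic $L$'s, weighted by $1/\mathrm{rad}(\mathrm{Disc}(L/k))$, then reproduces $X(\log X)^{e_0+1}$, and one arranges $-1+n=e_0+1$ — while an \emph{additional} contribution, coming from a distinguished subfield $L_0$ carrying ``too many'' roots of unity or units of order dividing $m$ (or from a positive-proportion subfamily with an extra $\ell$-rank in its $S$-unit group), has size $Y(\log Y)^{e_0+\delta}$ with $\delta\ge 2$. The crucial point, and the reason the obvious guesses $N=C_\ell$, $G=C_\ell\wr C_2$ do \emph{not} work, is that the enhancement must be one the Euler product cannot see: for $C_\ell\wr C_2$ the ``special subfield'' $\Q(\zeta_\ell)$ is detected prime-by-prime by $L_\pp$ at primes split in $k(\zeta_m)$, and the heuristic remains correct. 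One therefore needs the extra $\delta$ to originate in a non-localizable global feature.

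Assuming such $(G,k)$ is in hand, the verification would run: (i) compute $n=n(G,k)$ and the generic exponent $e_0$, confirming $-1+n=e_0+1$ (a finite group-theoretic and local computation); (ii) establish the subfield decomposition and bound the thin terms — Galois group collapsing to a proper subgroup, $M=M^{\sigma}$, repeated fields — by a power saving in $X$; (iii) prove $R_L(Y)\sim c_L Y(\log Y)^{e_0}$ for generic $L$ by Selberg--Delange applied to the relevant twisted-Kummer Dirichlet series, \emph{uniformly} in $L$ as $\mathrm{Disc}(L)\to\infty$; (iv) prove the lower bound $R_{L_0}(Y)\gg Y(\log Y)^{e_0+\delta}$ (a lower bound already contradicts \eqref{eMalle}); (v) conclude $N(X)\gg X(\log X)^{e_0+\delta}$ with $e_0+\delta>e_0+1=-1+n$, so no constant $c(G,k)$ with exponent $b(G,k)=-1+n$ can hold.

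The main obstacle, I expect, is steps (iii)--(iv): getting the power of $\log$ in $R_L(Y)$ \emph{exactly} right and \emph{uniformly} over the infinite family of subfields $L$ — controlling Selberg--Delange error terms for a number field of fixed degree but growing discriminant, and correctly attributing the extra $\log$ factors to the $S$-unit and class-group behaviour of $L$ and $L(\zeta_m)$ — and, most delicately, demonstrating that the anomalous contribution is truly absent from $\prod_\pp L_\pp(s)$ rather than already encoded there (as for $C_\ell\wr C_2$). A secondary obstacle is that if the anomaly is sought in class groups of a positive-density subfamily one would need unconditional $\ell$-rank lower bounds, so it is preferable to engineer the example so that the excess $\delta$ comes from roots of unity or units, which are unconditional.
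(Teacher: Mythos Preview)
Your broad strategy --- fix a distinguished subextension $L_0$ and show that the family of $G$-extensions containing $L_0$ already has a larger power of $\log X$ than equation \eqref{eMalleb} allows --- is exactly the shape of the paper's argument. But the proposal has a genuine gap: you never produce a group, and the mechanism you suggest for the excess (``too many roots of unity or units'' in $L_0$, or anomalous $\ell$-rank of $S$-units) is not the one that works. The paper's counterexamples are nilpotent groups $G_n$ of class $2$, built as explicit central extensions $1 \to (\Z/3\Z)^n \to G_n \to \Z/3\Z \oplus (\Z/9\Z)^n \to 1$ via the $2$-cocycles $\theta_i(\sigma,\tau)=\pi_0(\sigma)\pi_i(\tau)$. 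The distinguished subfield is $L_0=\Q(\zeta_9+\zeta_9^{-1})$, and the crux is not a unit-group phenomenon but a \emph{lifting} phenomenon: once the $\Z/3\Z$-quotient is pinned to the cyclotomic character $\chi_3$, every homomorphism $G_\Q \to A_n$ lifts to $G_n$ (Lemma~\ref{lemma: always lift}). The obstruction class $\theta_\phi \in H^2(G_\Q,\Z/3\Z)$ vanishes locally everywhere because any prime $p$ ramified in a $\Z/9\Z$-coordinate must satisfy $p\equiv 1 \bmod 9$, hence splits completely in $L_0$, forcing the local cocycle to be identically zero; global vanishing then follows from the Hasse principle for the Brauer group. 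This ``entanglement of Frobenius'' is the non-localizable input you were looking for, and it is of a different nature than Kl\"uners' shrinking of the cyclotomic action.

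With this in hand, counting $G_n$-extensions containing $L_0$ reduces to counting abelian $(\Z/9\Z)^n\oplus(\Z/3\Z)^n$-extensions, which is done directly via a parametrization by tuples of squarefree integers (Theorem~\ref{tParGn}) and a single application of Granville--Koukoulopoulos --- no subfield sum, no uniformity in $L$, and no Selberg--Delange over growing discriminants is needed. The comparison with $b(G_n,\Q)$ is then a finite computation of centralizer sizes in $G_n$ (Section~\ref{sConjugacy}); one finds the true exponent exceeds the predicted one for all $n\ge 2$. So your steps (iii)--(iv), which you flagged as the main obstacle, are in fact bypassed entirely once the right group is chosen; the real difficulty was locating the cocycle mechanism, which your proposal does not reach.
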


Maki \cite{Maki} proved this conjecture for abelian extensions, and Wood \cite{Wood} proved the same for any fair counting function and arbitrary finite sets of local conditions. It is important to emphasize that the leading constant $c(G, k)$ is an Euler product and that the subfield problem also disappears in all known cases. An additional benefit is that this counting function is much more natural from a geometric perspective occurring prominently in function field counting, which is rife with geometric techniques. 

Number field counting is intimately related to finding the distribution of class groups. Counting by discriminant also leads to problems in this setting, as first uncovered in Bartel--Lenstra \cite{BL}. Bartel--Lenstra give a counterexample to the Cohen--Martinet heuristics with the root cause being precisely the subfield problem. These reasons strongly suggests that it may be preferable to count by fair counting functions instead of the discriminant. 

\subsection{Results and conjectures}
Our main result shows that Conjecture \ref{cMalle} is not correct.

\begin{theorem}
\label{tMain}
There exists an infinite family of nilpotent groups $G$ of nilpotency class $2$ such that Conjecture \ref{cMalle} fails for the pair $(G, \Q)$. More precisely, the constant $b(G, \Q)$ in equation (\ref{eMalleb}) is too small.
\end{theorem}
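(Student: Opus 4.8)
The strategy is to produce a concrete infinite family of class-$2$ nilpotent groups $G$ for which we can count $G$-extensions of $\Q$ (ordered by the product of ramified primes) by bootstrapping from the known abelian cases, and to show the resulting asymptotic has a strictly larger power of $\log X$ than Conjecture~\ref{cMalle} predicts. The cleanest candidates are groups sitting in a central extension $1 \to Z \to G \to A \to 1$ with $A$ elementary abelian and $Z$ central of prime order, so that a $G$-extension $K/\Q$ is a $Z$-extension of an $A$-extension $F/\Q$. One counts in two stages: first enumerate the $A$-extensions $F/\Q$ with $\mathrm{N}(\mathfrak{f}(F/\Q)) \le X$ (this is governed by Maki/Wood), then for each such $F$ count the relevant $G$-extensions lying above it using class field theory over $F$, i.e.\ characters of ray class groups of $F$ cut out by the extension class in $H^2(A, Z)$. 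The point of departure from the folklore prediction is that when the group $A$ is chosen so that many primes $p$ have the property that a \emph{single} rational prime $p$ ramifying in $F$ forces a whole block of ramification in $K/F$ — or conversely splits into many primes of $F$ each of which can independently ramify or not in $K/F$ — the number of $G$-extensions above a fixed $F$ grows like a power of the number of prime divisors of $\mathfrak{f}(F)$, contributing extra logarithmic factors beyond what the conjugacy-class count~(\ref{eMalleb}) anticipates.

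**Key steps, in order.** \emph{Step 1: choice of the family.} Select $A = (\Z/\ell\Z)^r$ (or a similar explicit abelian group) and a central extension $G$ by $Z = \Z/\ell\Z$ chosen so that the commutator pairing $A \times A \to Z$ is nondegenerate (a Heisenberg-type group over $\FF_\ell$), with $r$ growing; this gives the infinite family. \emph{Step 2: parametrization.} Show that $G$-extensions $K/\Q$ correspond to pairs (an $A$-extension $F/\Q$, a homomorphism from a suitable ray class group of $F$ to $Z$ compatible with the Galois action and with the fixed $2$-cocycle), and that $\mathfrak{f}(K/\Q)$ is controlled by $\mathfrak{f}(F/\Q)$ together with the primes of $F$ ramifying in $K/F$. \emph{Step 3: inner count.} For fixed $F$, count the admissible $Z$-extensions of $F$ with bounded conductor using the structure of ray class groups; the main term is a sum over squarefree products of primes of $F$, and its size is a power of $\omega(\mathfrak{f}(F))$-type quantity. \emph{Step 4: outer sum.} Sum over $F$ using the asymptotics for $A$-extensions ordered by $\mathrm{N}(\mathfrak{f})$, keeping track of the distribution of the number of prime divisors (a Sathe--Selberg / Landau type analysis on the $A$-side), to obtain the final asymptotic $c \cdot X (\log X)^{b'}$ with $b' > b(G,\Q)$. \emph{Step 5: compare with the conjecture.} Compute $b(G,\Q)$ from~(\ref{eMalleb}) — the number of equivalence classes of $G - \{\mathrm{id}\}$ under the cyclotomic action over $\Q$, which for these groups is small because the commutator subgroup collapses many classes — and verify $b' > b(G,\Q)$, so the predicted constant is too small.

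**Main obstacle.** The serious difficulty is Step~3 and its interaction with Step~4: one must count $Z$-extensions of the varying field $F$ \emph{with an error term uniform in $F$} good enough to survive summation over the (sparse but numerous) family of $A$-extensions, and one must correctly identify which primes of $F$ are "free" to ramify in $K/F$ versus forced by the cocycle condition and the Galois-equivariance — this is where the extra logs are created or destroyed, so an imprecise bookkeeping of local conditions at ramified primes would give the wrong exponent. A secondary subtlety is ensuring the central extension genuinely has nilpotency class $2$ and that the family is infinite with the exponent gap $b' - b(G,\Q)$ persisting (indeed growing) as $r \to \infty$; this requires a clean closed-form for both $b'$ (from the two-stage count) and $b(G,\Q)$ (from the conjugacy-class combinatorics of Heisenberg-type groups) and a direct inequality between them.
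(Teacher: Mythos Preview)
Your plan has a genuine gap: the mechanism you propose for producing extra logarithmic factors does not work, and the specific family you suggest is unlikely to furnish counterexamples.

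First, the heuristic in your Step~3/4 is flawed. You write that a prime $p$ ``splits into many primes of $F$ each of which can independently ramify or not in $K/F$''. But we are counting \emph{Galois} extensions $K/\Q$; the $\Gal(F/\Q)$-action permutes the primes of $F$ above $p$ transitively and carries ramification data along, so the primes above $p$ cannot ramify independently in $K/F$. Hence no extra logarithms are created this way. Moreover, you never address the lifting obstruction: for a generic surjection $\phi\colon G_\Q\twoheadrightarrow A$, the inflated class $\theta_\phi\in H^2(G_\Q,Z)$ need not vanish, so most $A$-extensions $F$ do not underlie any $G$-extension at all. Without a mechanism that forces this class to vanish, the inner count in Step~3 is often empty.

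Second, your choice of family is probably wrong. For the Heisenberg group over $\FF_3$ (your $r=2$, $\ell=3$) the conjecture is known to hold by Fouvry--Koymans, so the smallest members of your family are \emph{not} counterexamples; nothing in your outline explains why larger $r$ would behave differently.

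The paper's argument rests on an idea absent from your proposal: one fixes a specific \emph{cyclotomic} subextension. Concretely, the paper takes $A_n=\Z/3\Z\oplus(\Z/9\Z)^n$ (the $\Z/9\Z$ factors are essential and your elementary-abelian $A$ lacks them) and restricts to those $\phi\colon G_\Q\to A_n$ whose $\Z/3\Z$-component cuts out $\Q(\zeta_9+\zeta_9^{-1})$. Any prime $p$ ramifying in a $\Z/9\Z$-component must then satisfy $p\equiv 1\bmod 9$, hence splits completely in $\Q(\zeta_9+\zeta_9^{-1})$; this forces the local invariants of $\theta_{i,\phi}$ to vanish everywhere, so \emph{every} such $\phi$ lifts to $G_n$ (Lemma~\ref{lemma: always lift}). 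The count of $G_n$-extensions with this fixed subfield therefore reduces to an abelian count for $\ker(\phi)\cong(\Z/9\Z)^n\oplus(\Z/3\Z)^n$, yielding $\log$-exponent $\alpha=\tfrac{9^n-1}{3}+\tfrac{27^n-1}{6}$. A direct conjugacy-class computation then shows $\alpha>b(G_n,\Q)$ for $n\ge 2$. The extra logarithms thus arise not from primes of $F$ ramifying independently, but from an \emph{entanglement of Frobenius}: fixing the cyclotomic piece shrinks the group $G\times_H(\Z/|G|\Z)^\ast$ acting on inertia types, producing more orbits than the naive $G\times(\Z/|G|\Z)^\ast$-count in~(\ref{eMalleb}).
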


Although we have restricted to $\Q$ for simplicity, it is not difficult to adapt our arguments to find many more counterexamples of a similar flavor showing that the above phenomenon persists in a wide number of settings.

Our counterexample is of a genuinely different nature than Kl\"uners' counterexample. Firstly, Kl\"uners takes $G = C_3 \wr C_2$, which is solvable but not nilpotent. Secondly, Kl\"uners' counterexample is no longer a counterexample when counting by product of ramified primes, which was historically another important motivation to count by ramified primes. In fact, the authors \cite[Section 3.2]{KPMalle1} have previously given strong heuristic evidence that Malle's original conjecture, see Conjecture \ref{cMalleDisc}, is correct for nilpotent extensions. Thirdly and perhaps most importantly, although both counterexamples proceed by fixing a cyclotomic subextension, Kl\"uners' counterexample relies on the shrinking of the cyclotomic action, while we fundamentally rely on a certain \emph{entanglement of Frobenius} that shrinks the conjugation action. 

A novel feature of our work is the first family of counterexamples when counting $G$-extensions in their regular representation. This phenomenon has never been observed for discriminant counting. All known modifications of Malle's conjecture (such as \cite{Turkelli}) predict no counterexamples when counting $G$-extensions by discriminant in their regular representation. 

We still expect the veracity of the asymptotic in equation (\ref{eMalle}), but of course not with the naive choice of $b(G, \Q)$ from equation (\ref{eMalleb}). It is at present unclear what the right choice of $b(G, \Q)$ is in general. We will however make several predictions. We have opted to restrict ourselves to nilpotent extensions as even merely making predictions for number field counting has proven to be a deceptively difficult task.

To this end, let $\phi: G \twoheadrightarrow H$ be a surjective homomorphism, let $\chi(\text{cyc}): G_\Q \rightarrow (\Z/|G| \Z)^\ast$ be the cyclotomic character and let $r: (\Z/|G| \Z)^\ast \twoheadrightarrow H$ be a surjection. We define
$$
\text{Epi}_{(H, \phi)}(G_{\Q}, G)
$$
be the set of continuous surjective homomorphisms $\psi: G_{\Q} \twoheadrightarrow G$ satisfying the equations
$$
\phi \circ \psi = r \circ \chi(\text{cyc}), \quad \Q(\psi) \cap \Q(\zeta_{|G|}) = \Q(\phi \circ \psi).
$$
We write $G \times_H (\Z/|G| \Z)^\ast \subseteq G \times (\Z/|G| \Z)^\ast$ for the subgroup consisting of pairs $(g, \alpha)$ such that $\phi(g) = r(\alpha)$. We let $G \times (\Z/|G| \Z)^\ast$ act on $\text{ker}(\phi) - \{\text{id}\}$ by sending $n$ to $gn^\alpha g^{-1}$. We restrict this action to the subgroup $G \times_H (\Z/|G| \Z)^\ast$, and we denote by
$$
b_{(H, \phi)}(G) := |(\text{ker}(\phi)-\{\text{id}\})/(G \times_{H} (\Z/|G| \Z)^\ast)|
$$
the resulting number of equivalence classes.

\begin{conjecture}[Conjecture \ref{conj1}]
\label{conjIntro}
Let $G$ be a nilpotent group. For each $H, \phi$ as above, there exists a positive constant $c_{(H, \phi)}(G)$ such that
$$
|\{\psi \in \textup{Epi}_{(H, \phi)}(G_{\Q}, G): \mathfrak{f}(\psi) \leq X\}| \sim c_{(H, \phi)}(G) \cdot X \cdot \log(X)^{b_{(H, \phi)}(G) - 1}.
$$
\end{conjecture}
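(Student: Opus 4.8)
The plan is to prove Conjecture \ref{conjIntro} by reducing the count of surjections $\psi \in \textup{Epi}_{(H, \phi)}(G_\Q, G)$ to a sum over the finitely many ``shapes'' determined by $H$, $\phi$ and $r$, and then to execute an induction on the nilpotency class (equivalently, the length of the lower central series) of $G$. The base case is the abelian case, which is already available from Maki \cite{Maki} and Wood \cite{Wood}: for $G$ abelian, $H$ is forced by $\phi$, the fixed cyclotomic condition $\phi \circ \psi = r \circ \chi(\text{cyc})$ pins down the corresponding genus-field subextension, and the linear-in-$X$ asymptotic with the stated power of $\log X$ is exactly what the abelian fair-counting results give after one checks that $b_{(H, \phi)}(G)$ specializes correctly. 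The substantive content is the inductive step.

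\textbf{The inductive step.} Let $Z \leq G$ be a characteristic central subgroup contained in $[G,G] \cap Z(G)$ (for instance the last nontrivial term of the lower central series, or a cyclic central subgroup of prime order inside it), and let $\pi: G \twoheadrightarrow G/Z =: \overline{G}$. Given $\psi: G_\Q \twoheadrightarrow G$ satisfying the prescribed conditions, its pushforward $\overline{\psi} := \pi \circ \psi$ is a surjection onto $\overline{G}$ with the analogous conditions relative to $(H, \overline{\phi})$, where $\overline{\phi}: \overline{G} \twoheadrightarrow H$ is the map induced by $\phi$ (this uses $Z \subseteq \ker \phi$, which one arranges by choosing $Z$ inside $[G,G]$ and noting $H$ is a quotient through which the cyclotomic character factors, hence abelian, so $[G,G] \subseteq \ker \phi$). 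The idea is to organize the count as
$$
|\{\psi \in \textup{Epi}_{(H, \phi)}(G_\Q, G): \mathfrak{f}(\psi) \leq X\}| = \sum_{\overline{\psi}} |\{\psi \mapsto \overline{\psi} : \mathfrak{f}(\psi) \leq X\}|,
$$
where $\overline{\psi}$ ranges over $\textup{Epi}_{(H, \overline{\phi})}(G_\Q, \overline{G})$, and to analyze the inner fiber count via class field theory over the fixed field $\Q(\overline{\psi})$: lifts $\psi$ of a given $\overline{\psi}$ correspond (after dealing with the surjectivity and the entanglement condition $\Q(\psi) \cap \Q(\zeta_{|G|}) = \Q(\phi \circ \psi)$) to certain ray class characters of $\Q(\overline{\psi})$ valued in $Z$, twisted by a fixed solution coming from the obstruction-theoretic lifting datum. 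One then needs a uniform (in $\overline{\psi}$) asymptotic for the number of such characters with bounded conductor, summed against the outer sum, and this is where the power of $\log X$ gets incremented by exactly the number of ``new'' $Z$-orbits, i.e.\ by $b_{(H,\phi)}(G) - b_{(H,\overline{\phi})}(\overline{G})$. Combining the two contributions telescopes to the claimed exponent $b_{(H, \phi)}(G) - 1$.

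\textbf{Key steps, in order.} (1) Reduce to a fixed cyclotomic/entanglement ``shape'': fix $H$, $\phi$, $r$ and translate the conditions defining $\textup{Epi}_{(H, \phi)}(G_\Q, G)$ into conditions on a torsor under $Z$-valued characters over a varying base field. (2) Establish the base case ($G$ abelian) by matching the definition of $b_{(H,\phi)}(G)$ against Maki--Wood. (3) Carry out the central extension step: for $1 \to Z \to G \to \overline{G} \to 1$ with $Z$ central in $[G,G]$, parametrize lifts of $\overline{\psi}$ by the Brauer-type lifting obstruction (which must vanish for infinitely many $\overline{\psi}$, and one must control how often) together with a coset of $\mathrm{Hom}(G_{\Q(\overline{\psi})}, Z)$ cut out by the surjectivity and entanglement constraints. (4) Prove a uniform counting statement for $Z$-valued Hecke/ray class characters of $\Q(\overline{\psi})$ with conductor norm (measured back over $\Q$ through $\mathfrak{f}(\psi)$) bounded by $X$, with main term $c \cdot X \cdot \log(X)^{t-1}$ where $t$ counts the relevant orbits, and with error terms summable over the outer $\overline{\psi}$-sum. (5) Bound the ``bad'' contributions: non-surjective lifts, lifts violating the entanglement condition, and $\overline{\psi}$ for which the obstruction does not vanish, showing each contributes a strictly smaller power of $X$ or $\log X$. (6) Assemble: plug the inner asymptotic into the outer sum (which by induction is itself $\asymp X \log(X)^{b_{(H,\overline{\phi})}(\overline{G})-1}$), and verify the orbit-counting bookkeeping $b_{(H,\phi)}(G) = b_{(H,\overline{\phi})}(\overline{G}) + (\text{new orbits from } Z)$ so that the exponents add correctly.

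\textbf{Main obstacle.} The hard part will be step (4) together with the uniformity demanded by step (5): one needs an asymptotic for counting $Z$-valued characters (equivalently, $Z$-extensions) of a family of base fields $\Q(\overline{\psi})$ of growing degree, with the conductor measured through the \emph{product of ramified primes over $\Q$} rather than over the base, and with error terms that survive summation over the infinitely many base fields in the outer sum. The abelian fair-counting technology of Maki and Wood handles a single base field, but making it uniform across a family — controlling the dependence of the leading constant and the error term on the arithmetic of $\Q(\overline{\psi})$ (discriminant, regulator, class number, the precise splitting behavior governing which primes of $\Q$ become ``ramified in $\psi$'') — is the genuine analytic core. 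A secondary but real difficulty is the entanglement bookkeeping: the condition $\Q(\psi) \cap \Q(\zeta_{|G|}) = \Q(\phi \circ \psi)$ must be imposed on the $Z$-coset of lifts, and one must check that it is a ``generic'' condition (satisfied by a positive proportion with the same power of $\log X$) rather than one that further shrinks the exponent — this is precisely the entanglement-of-Frobenius phenomenon that drives Theorem \ref{tMain}, so the conjecture is only consistent if this bookkeeping is done with care and the definition of $b_{(H,\phi)}(G)$ via the $G \times_H (\Z/|G|\Z)^\ast$-action is exactly the right one.
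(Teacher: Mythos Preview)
The statement you are attempting to prove is a \emph{conjecture} in the paper, not a theorem. The paper explicitly labels it as Conjecture \ref{conjIntro} (restated later as Conjecture \ref{conj1}) and offers no proof. The only case the paper establishes is the specific family $(G_n)_{n \geq 2}$ in Theorem \ref{th: counterexample}, where the crucial lifting obstruction is shown to vanish identically (Lemma \ref{lemma: always lift}) once the cyclotomic subextension is pinned, reducing everything to an abelian count handled directly via \cite{GK}. The paper further remarks that when $\ker(\phi)$ is abelian one might hope to extract the result from \cite{AD} or \cite{DY}, but flags this as requiring ``substantial work'' and leaves the general nilpotent case open.

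Your proposal is therefore not comparable to any proof in the paper; it is a research program toward an open problem, and the obstacles you yourself name are genuine and unresolved. Step (4)---uniform asymptotics for $Z$-valued characters over a \emph{family} of base fields $\Q(\overline{\psi})$ of unbounded degree, with conductor measured by ramified primes over $\Q$ and with error terms summable over the outer sum---is not available in the literature; M\"aki and Wood treat a fixed base field. Step (3) hides a second gap: the Brauer-type lifting obstruction is not known to vanish in general (it does in the paper's example precisely because of the special cyclotomic choice in Lemma \ref{lemma: always lift}), and controlling the density of $\overline{\psi}$ for which it vanishes, with the correct leading constant, is itself open. Finally, the telescoping identity $b_{(H,\phi)}(G) = b_{(H,\overline{\phi})}(\overline{G}) + (\text{new orbits from } Z)$ in step (6) is asserted without justification and is not obviously true: orbits of $G \times_H (\Z/|G|\Z)^\ast$ on $\ker(\phi)-\{\text{id}\}$ need not decompose cleanly into orbits on $\ker(\overline{\phi})-\{\text{id}\}$ plus orbits on $Z-\{\text{id}\}$, and even granting the individual asymptotics, combining an outer sum of order $X(\log X)^{a-1}$ with inner fibers of order $X(\log X)^{t-1}$ does not mechanically yield $X(\log X)^{a+t-1}$ without a genuine Tauberian or partial-summation argument.
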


\begin{conjecture}[Conjecture \ref{conj2}]
\label{conjIntro2}
Let $G$ be a nilpotent group with $|G|$ odd. Then the leading constant $c_{(H, \phi)}(G)$ satisfies the Malle--Bhargava principle.
\end{conjecture}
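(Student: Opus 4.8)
The plan is to prove Conjecture~\ref{conjIntro} and Conjecture~\ref{conjIntro2} in tandem by studying the Dirichlet series
\[
F_{(H,\phi)}(s) \;=\; \sum_{\psi \in \textup{Epi}_{(H,\phi)}(G_\Q, G)} \mathfrak{f}(\psi)^{-s},
\]
showing that it extends meromorphically slightly past $\Real(s)=1$, that its rightmost singularity is a pole at $s=1$ of order $b_{(H,\phi)}(G)$, and that the leading Laurent coefficient is an absolutely convergent Euler product (times an archimedean factor). Since $G$ is nilpotent and $|G|$ is odd one fixes a central series $1 = G_0 \triangleleft G_1 \triangleleft \cdots \triangleleft G_m = G$ with each quotient $G_i/G_{i-1} \cong \Z/\ell_i$ central in $G/G_{i-1}$ and each $\ell_i$ odd, and one builds $\psi$ by lifting, for $i = m, m-1, \dots, 1$, a surjection $G_\Q \twoheadrightarrow G/G_i$ through the central extension $1 \to \Z/\ell_i \to G/G_{i-1} \to G/G_i \to 1$, carrying along both the weight $\mathfrak{f}$ and the cyclotomic plus linear-disjointness constraints that cut out $\textup{Epi}_{(H,\phi)}$.

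At the inductive step one counts, weighted by product of ramified primes, the lifts of a fixed $\bar\psi: G_\Q \twoheadrightarrow G/G_i$ that have the prescribed behaviour away from the finitely many primes dividing $|G|$. Once the $H^2(G_\Q, \Z/\ell_i)$-obstruction is checked to vanish, these lifts form a coset inside $H^1(G_\Q, \Z/\ell_i)$ cut out by congruence conditions at those finitely many primes, and the weighted count of the corresponding twisting classes reduces to a Fourier-analytic estimate over ray class groups exactly as in M\"aki \cite{Maki} and Wood \cite{Wood} for the abelian case. The genuinely new point is bookkeeping: the cyclotomic and disjointness constraints cause the joint conjugation-and-power action on $\ker(\phi) - \{\textup{id}\}$ to be only that of the fibre product $G \times_H (\Z/|G|\Z)^\ast$, so the number of \emph{free} parameters that accumulate over the induction is exactly the orbit count $b_{(H,\phi)}(G)$ --- the same entanglement of Frobenius responsible for Theorem~\ref{tMain}. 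The oddness of each $\ell_i$ enters twice here: it removes the exceptional case of Grunwald--Wang, so that the embedding problems really are solvable with the prescribed local behaviour, and it forces every $\Z/\ell_i$-extension to be unramified at the real place, so that there is no archimedean correlation among the local conditions at distinct primes.

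Assembling the inductive estimates, one writes $F_{(H,\phi)}(s)$ as a product of finitely many explicit factors at the primes dividing $|G|$ times an Euler product $\prod_p E_p(s)$ with $E_p(s) = 1 + a_p\, p^{-s} + O(p^{-2\Real(s)})$, where $a_p$ counts the tamely ramified local lifts $G_{\Q_p} \to G$ carrying the correct cyclotomic part; one then shows that $\prod_p E_p(s)$ differs from a product of $b_{(H,\phi)}(G)$ shifted Riemann zeta factors by a function that is holomorphic and nonvanishing on $\Real(s) > 1/2$. A Tauberian theorem now yields the asymptotic of Conjecture~\ref{conjIntro} and identifies $c_{(H,\phi)}(G)$ with the value at $s=1$ of a convergent Euler product (the archimedean factor accounting for the place at infinity), which is precisely the statement that the Malle--Bhargava principle holds.

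The main obstacle is the uniformity that separates a heuristic from a theorem: one must bound the error terms in the weighted $H^1$-counts uniformly in the number of ramified primes, control the tail of $\prod_p E_p(s)$ beyond the critical line once the $\zeta$-factors are stripped off, and --- the delicate heart of the matter --- rule out any additional non-local constraint concealed in the $H^2$-obstruction that would introduce a correction of the kind found by Bartel--Lenstra \cite{BL} in the discriminant setting. Establishing that for $|G|$ odd the only global constraint is the cyclotomic one already recorded by $\chi(\text{cyc})$, so that the corrected count retains a purely local leading constant, is where the hypothesis on $|G|$ is indispensable: at the prime $2$ the failure of Grunwald--Wang together with the real place reintroduce exactly the sort of global obstruction that would break the Euler product.
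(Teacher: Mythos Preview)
The statement you are trying to prove is a \emph{conjecture} in the paper, not a theorem: there is no proof in the paper to compare against. What the paper actually does is (i) state Conjectures~\ref{conjIntro} and~\ref{conjIntro2} as predictions, (ii) compute in Proposition~\ref{pBhargava} what the Malle--Bhargava heuristic \emph{predicts} the leading constant should be (this is a computation on the heuristic side, evaluating $\sum_{n\le X} f(n)$ for the multiplicative function arising from the local Euler product --- it says nothing about the actual count of $G$-extensions), and (iii) verify the conjecture only in the special family $G=G_n$ with the particular choice of $(H,\phi)$ from Remark~\ref{remark 1}(c), via Theorem~\ref{th: counterexample}. The general case is explicitly left open; indeed the paper remarks that even when $\ker(\phi)$ is abelian one might hope to extract the result from \cite{AD} or \cite{DY} only ``with substantial work required''.

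Your proposal is not a proof either; it is a plan, and you say so in your own final paragraph. The inductive-lifting-through-a-central-series strategy you outline is reasonable and is essentially the shape of the argument in \cite{KPMalle1}, but the two ingredients you flag as ``the main obstacle'' --- uniform error terms in the $H^1$-counts across all stages of the induction, and ruling out any hidden non-local constraint in the $H^2$-obstruction beyond the cyclotomic one --- are precisely the content of the conjecture, not technicalities to be dispatched later. In particular, your assertion that ``once the $H^2(G_\Q,\Z/\ell_i)$-obstruction is checked to vanish'' glosses over the heart of the matter: for general nilpotent $G$ this obstruction does \emph{not} vanish for every $\bar\psi$, and controlling the density of $\bar\psi$ for which it does (together with the number of lifts and their ramification) uniformly enough to feed into a Tauberian argument is exactly what is not known. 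So your write-up accurately identifies the architecture of a proof and the reasons the oddness hypothesis is needed, but it does not supply the missing arguments, and neither does the paper.
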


For the precise computation of the leading constant provided by the Malle--Bhargava principle, we refer to Conjecture \ref{conj2}. It is tempting to speculate that both conjectures are true in a much wider generality, which we shall leave as an open question. However, Conjecture \ref{conj2} does not hold when $|G|$ is even, in which case a rational correction factor is needed.

We will now explicate Theorem \ref{tMain}, in particular the construction of the infinite family $G$. We take $A_n = \Z/3\Z \oplus (\Z/9\Z)^n$. We let $\pi_0: A_n \rightarrow \Z/3\Z$ be the projection on the first $\Z/3\Z$ and we let $\pi_i: A_n \rightarrow \Z/3\Z$ be the projection on the $i$-th $\Z/9\Z$ composed with the quotient map $\Z/9\Z \rightarrow \Z/3\Z$. We introduce the $2$-cocycles
\[
\theta_i(\sigma, \tau) := \pi_0(\sigma) \cdot \pi_i(\tau) \in H^2(A_n, \Z/3\Z)
\]
and $\theta := (\theta_1, \dots, \theta_n) \in H^2(A_n, (\Z/3\Z)^n)$. Given such a $\theta$, we have a corresponding central exact sequence given by
\[
1 \rightarrow (\Z/3\Z)^n \rightarrow G_n \rightarrow A_n \rightarrow 1.
\]
In our proofs, we will choose the family $(G_n)_{n \geq 2}$ for the infinite family of Theorem \ref{tMain}. In the process we will also prove a special case of Conjectures \ref{conjIntro} and \ref{conjIntro2}.

\begin{theorem}[Theorem \ref{th: counterexample}]
\label{tGn}
Let $G_n$ be as above and write $q_n : G_n \rightarrow A_n$ for the natural quotient map. Let $H_n := \Z/3\Z$ and let $\phi := \pi_0 \circ q_n$. Let $\rho: G_\Q \rightarrow \Z/3\Z$ be any character such that the fixed field of the kernel equals $\Q(\zeta_9 + \zeta_9^{-1})$. Then we have
\[
\sum_{\substack{\psi \in \textup{Epi}(G_\Q, G_n) \\ \pi_0 \circ q_n \circ \psi = \rho \\ \mathfrak{f}(\psi) \leq X}} 1 \sim 27^n \cdot \frac{X  \cdot (\log X)^{\alpha - 1}}{3 \cdot \Gamma(\alpha)} \cdot c_0,
\]
where $\alpha := \frac{9^n - 1}{3} + \frac{27^n - 1}{6}$ and $c_0$ equals the conditionally convergent Euler product
\[
c_0 := \prod_p \left(1 + \frac{(9^n - 1)\mathbf{1}_{p \equiv 4, 7 \bmod 9} + (27^n - 1)\mathbf{1}_{p \equiv 1 \bmod 9}}{p}\right) \left(1 - \frac{1}{p}\right)^\alpha.
\]
\end{theorem}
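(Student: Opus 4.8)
The plan is to parametrise the homomorphisms in the sum by their push-forward to $A_n$, to determine the resulting embedding obstruction and the local ramification densities prime by prime, and to extract the asymptotic from the associated Dirichlet series by a Selberg--Delange-type Tauberian argument.

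First, a homomorphism $\psi \in \mathrm{Epi}(G_\Q, G_n)$ with $\pi_0 \circ q_n \circ \psi = \rho$ is the same datum as $\bar\psi := q_n \circ \psi : G_\Q \to A_n$ together with a lift of $\bar\psi$ through $G_n \twoheadrightarrow A_n$. The hypothesis on $\rho$ forces $\bar\psi = (\rho, \chi_1, \dots, \chi_n)$ with $\chi_i \in \mathrm{Hom}(G_\Q, \Z/9\Z)$, and pulling the cocycle $\theta_i(\sigma, \tau) = \pi_0(\sigma)\pi_i(\tau)$ back along $\bar\psi$ identifies the obstruction to the existence of a lift with the tuple of cup products $(\rho \cup \bar\chi_i)_{1 \le i \le n} \in H^2(G_\Q, (\Z/3\Z)^n)$, where $\bar\chi_i$ is the reduction of $\chi_i$ modulo $3$. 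When this obstruction vanishes the set of lifts is a torsor under $\mathrm{Hom}(G_\Q, (\Z/3\Z)^n)$, and by the non-generator property of the Frattini subgroup a lift $\psi$ is surjective exactly when $\rho, \bar\chi_1, \dots, \bar\chi_n$ are linearly independent in $\mathrm{Hom}(G_\Q, \Z/3\Z)$; the non-surjective contributions are pruned by inclusion--exclusion over the hyperplanes of $G_n/\Phi(G_n) \cong (\Z/3\Z)^{n+1}$ and produce strictly smaller powers of $\log X$. Equivalently one may present $G_n$ as the iterated fibre product over $\langle a_0 \rangle \cong \Z/3\Z$ of the order-$81$ groups $E_i = \langle a_0, b_i, c_i \mid [a_0, b_i] = c_i\rangle$ and work with tuples of homomorphisms $G_\Q \to E_i$ of fixed $\langle a_0 \rangle$-component $\rho$.

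Second, I would determine when the obstruction vanishes. Because $\zeta_3 \notin \Q$ one has $H^0(\Q, \mu_3) = 1$, and Poitou--Tate duality then shows that the localisation map on $H^2(\Q, \Z/3\Z)$ is injective onto the restricted sum of the local groups $H^2(\Q_v, \Z/3\Z)$, with no residual reciprocity constraint; moreover these local groups vanish unless $v = p \equiv 1 \bmod 3$. Since $\rho$ is unramified away from $3$, its localisation $\rho_p$ is unramified for every such $p$ and is nontrivial precisely when $p \equiv 4, 7 \bmod 9$; as the cup product of a nontrivial unramified character with $\bar\chi_{i,p}$ vanishes if and only if $\bar\chi_{i,p}$ is unramified, the obstruction $\rho \cup \bar\chi_i$ vanishes globally if and only if every $\chi_i$ is unramified at each prime $p \equiv 4, 7 \bmod 9$ --- a transparent local condition which, crucially, is empty at primes $p \equiv 1 \bmod 9$. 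Now comes the local density computation, carried out tamely at each $p \ne 3$: the inertia image of a local homomorphism extending the unramified $\rho_p$ lies in $\ker(\pi_0 \circ q_n)$, a group of order $27^n$, and has order dividing $\gcd(p-1, 3)$, while the Frobenius image ranges over a coset of $\ker(\pi_0 \circ q_n)$ of size $27^n$ and must centralise the inertia image. Using the relations $[a_0, b_i] = c_i$ and $[b_i, b_j] = 1$, the order-$3$ part of $\ker(\pi_0 \circ q_n)$ is central of order $9^n$ whereas $\ker(\pi_0 \circ q_n)$ has exponent $9$; a short count then shows that the number of unramified local homomorphisms is $27^n$ at every $p$, and that the number of ramified ones divided by $27^n$ equals $27^n - 1$ for $p \equiv 1 \bmod 9$, equals $9^n - 1$ for $p \equiv 4, 7 \bmod 9$ (here inertia must be central, so the whole Frobenius coset remains admissible), and equals $0$ for $p \equiv 2, 5, 8 \bmod 9$; the places $3$ and $\infty$ contribute a bounded factor, and together with the mandatory ramification of $\psi$ at $3$ and the global gluing normalisation this yields the scalar $27^n/3$.

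Consequently $D(s) := \sum \mathfrak{f}(\psi)^{-s}$ has, at $s = 1$, the same singularity as $\tfrac{27^n}{3} \prod_p \bigl(1 + (9^n-1)\mathbf{1}_{p \equiv 4,7 \bmod 9}\, p^{-s} + (27^n-1)\mathbf{1}_{p \equiv 1 \bmod 9}\, p^{-s}\bigr)$. By the prime number theorem modulo $9$ the residues $4, 7$ have density $\tfrac13$ and the residue $1$ has density $\tfrac16$, so the logarithm of the product is $\alpha \log\tfrac{1}{s-1} + O(1)$ near $s = 1$ with $\alpha = \tfrac{9^n-1}{3} + \tfrac{27^n-1}{6}$, and the regularised value of the product is the conditionally convergent Euler product $c_0$; hence $D(s) = (s-1)^{-\alpha} g(s)$ with $g$ holomorphic and non-vanishing at $s = 1$ and $g(1) = 27^n c_0/3$, and a Selberg--Delange / Landau Tauberian theorem gives the stated asymptotic $\sim \tfrac{g(1)}{\Gamma(\alpha)} X (\log X)^{\alpha - 1}$. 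I expect the main obstacle to be the combination of two things: running the prime-by-prime analysis with enough uniformity while faithfully tracking the \emph{entanglement of Frobenius} --- the fact that the fixed character $\rho$ determines, according to $p \bmod 9$, whether the available ramification at $p$ lives in the $\chi_i$ or only in the central lift --- and, on the analytic side, that the Euler product defining $c_0$ converges only conditionally, precisely because the local density oscillates with $p \bmod 9$, which forces one to use the Selberg--Delange method with controlled error terms rather than a naive Tauberian input; the abelian counting theorems of Maki and Wood then supply the remaining routine ingredients.
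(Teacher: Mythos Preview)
Your local computations and the surjectivity criterion via the Frattini quotient are correct, and the analytic endgame (Selberg--Delange, equivalently Granville--Koukoulopoulos) is the same as the paper's. The substantive gap is the passage from local masses to the global Euler product: you assert that $D(s)=\sum_\psi \mathfrak{f}(\psi)^{-s}$ has the same singularity as $\tfrac{27^n}{3}\prod_p(\cdots)$ by invoking a ``global gluing normalisation'' and the abelian theorems of M\"aki and Wood, but $G_n$ is not abelian and neither applies directly. Establishing this multiplicativity is exactly the arithmetic content of the theorem, and the paper supplies it via an explicit parametrisation (Theorem~\ref{tParGn}): fixing topological generators $\sigma_p$ of $G_\Q(3)$, the map $\psi\mapsto(\psi(\sigma_p))_p$ is shown to be a bijection onto tuples $(v_g)_{g\in G_n\setminus\{\mathrm{id}\}}$ of pairwise coprime squarefree integers with $p\mid v_g\Rightarrow p\equiv 1\bmod\mathrm{ord}(g)$, with $\mathfrak{f}(\psi)=\prod_g v_g$. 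Surjectivity of this map is nontrivial and uses Lemma~\ref{lemma: always lift} to produce a lift and then twists it by characters to hit a prescribed tuple. Once this is in hand the count becomes $2\cdot 27^n\sum_{m\le X/3}f(m)$ for the multiplicative $f$ matching your Euler factor. An alternative is to exploit that $\ker(\phi)\cong(\Z/9\Z)^n\oplus(\Z/3\Z)^n$ is abelian and cite Alberts--O'Dorney or Darda--Yasuda, but you do not carry this out either.

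There is also a slip in your obstruction analysis. You write that $\rho\cup\bar\chi_i$ vanishes ``if and only if every $\chi_i$ is unramified at each prime $p\equiv 4,7\bmod 9$''; but for such $p$ the tame inertia image of $\chi_i$ in $\Z/9\Z$ has order dividing $\gcd(p-1,9)=3$, hence lies in $3\Z/9\Z$, so $\bar\chi_i$ is \emph{automatically} unramified there and the obstruction always vanishes --- this is precisely Lemma~\ref{lemma: always lift}. Your subsequent local count of $9^n-1$ ramified classes at such primes correctly allows $\chi_i$ to ramify (with central inertia), so the final numbers are right despite the inconsistency.
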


One pleasant feature is that $\ker(\phi) \cong (\Z/9\Z)^n \oplus (\Z/3\Z)^n$ is abelian despite the fact that $G_n$ is not. It is no coincidence that the logarithmic exponent matches exactly the logarithmic exponent when counting $(\Z/9\Z)^n \oplus (\Z/3\Z)^n$-extensions. In fact, once one fixes the subextension $\Q(\zeta_9 + \zeta_9^{-1})$, we have the remarkable property that any $G_n/[G_n, G_n]$-extension lifts to a $G_n$-extension. This is the crux of our counterexample stemming from Lemma \ref{lemma: always lift}. Once we have proven Theorem \ref{tGn} in Section \ref{sLifts}, the computation of $b(G_n, \Q)$ in Section \ref{sConjugacy} immediately gives Theorem \ref{tMain}.

In fact, it is plausible that one can use \cite[Theorem 1.1]{AD} or \cite[Theorem 1.3.3]{DY} to prove Conjectures \ref{conjIntro} and \ref{conjIntro2} as soon as $\ker(\phi)$ is abelian, with substantial work required to deduce explicitly from those works the leading constant $c_0$ and the logarithmic exponent $\alpha$. We have opted to give a different proof to keep our work completely self-contained.

We remark that if $H$ and $\phi$ are trivial, then $b_{(H, \phi)}(G)$ equals the right hand side of equation (\ref{eMalleb}). In particular, we predict that the full Malle--Bhargava principle holds when counting tame, nilpotent $G$-extensions with $|G|$ odd. We emphasize that, once one accounts for the counterexamples found in Theorem \ref{tMain}, all good properties of fair counting functions are restored again: the leading constant is an Euler product and the subfield problem disappears.

\subsection{Comparison with previous results and conjectures}
Malle's original conjecture has been proven over $\Q$ for a handful of specific groups namely
\begin{itemize}
\item Davenport--Heilbronn \cite{DH} for cubic $S_3$-extensions,
\item Bhargava \cite{Bhargava1, Bhargava2} for quartic $S_4$-extensions and quintic $S_5$-extensions,
\item Belabas--Fouvry \cite{BelabasFouvry} and Bhargava--Wood \cite{BhargavaWood} for sextic $S_3$-extensions,
\item Wright \cite{Wright} for abelian extensions,
\item Masri--Thorne--Tsai--Wang \cite{MTTW}, building on earlier work of Wang \cite{Wang}, for direct products $G \times A$ with $G \in \{S_3, S_4, S_5\}$ and $A$ abelian,
\item Cohen--Diaz y Diaz--Olivier \cite{CDO} for quartic $D_4$-extensions,
\item Fouvry--Koymans \cite{FouKoy} for nonic Heisenberg extensions,
\item Kl\"uners \cite{KlunersWreath} for certain wreath products,
\item Koymans--Pagano \cite{KPMalle1} for nilpotent groups such that all minimal order elements are central.
\end{itemize}
In many situations we have upper and lower bounds for number field counting instead of asymptotics. This comprises a vast literature as well, including work of Alberts \cite{Alberts}, Kl\"uners--Malle \cite{KM}, Ellenberg--Venkatesh \cite{EV}, Couveignes \cite{Couveignes} and Lemke Oliver--Thorne \cite{LOT}.

Of particular relevance to our work is the recent spur of other modifications of Malle's conjecture. This was initiated by T\"urkelli \cite{Turkelli} for discriminant counting, who modified Conjecture \ref{cMalleDisc} to account for Kl\"uners' counterexample. Our prediction for $b_{(H, \phi)}(G)$ is a direct parallel of T\"urkelli's modification. 

Gundlach \cite[Conjecture 1.5]{Gundlach} proposed a variant of Malle's conjecture where one counts by multiple fair invariants. He avoids our counterexamples by demanding that this invariant lies in the range $(\delta X, X)$ for some $\delta > 0$. Our conjecture however does not impose this condition, which necessitates a more thorough treatment of the logarithmic exponent $b_{(H, \phi)}(G)$. 

Darda--Yasuda \cite[Conjecture 1.3.1]{DY} propose a far reaching conjecture for counting points on stacks by a wide class of height functions. Their conjecture is similar in spirit than ours but substantially less precise: our conjecture is more precise under what circumstances the logarithmic exponent may increase and also more precise about the leading constant (with no prediction being made in Darda--Yasuda). We remark that none of the aforementioned works \cite{DY, Gundlach, Turkelli} make a prediction about the leading constant $c_{(H, \phi)}(G)$. 

\subsection{Overview of the paper}
The key result for our counterexample is Lemma \ref{lemma: always lift}. This lemma shows that the group $G_n$ has the key property that all $G_n/[G_n, G_n]$-extensions lift to a $G_n$-extension once one fixes a suitable cyclotomic subextension. We will exploit Lemma \ref{lemma: always lift} to show in Section \ref{sLifts} that the count of $\psi \in \text{Hom}(G_\Q, G_n)$, containing this suitably constructed cyclotomic subextension, equals the number of abelian $(\Z/9\Z)^n \oplus (\Z/3\Z)^n$-extensions. In Section \ref{sConjugacy} we will compute the naive Malle constant $b(G_n, \Q)$. These results immediately give Theorem \ref{tMain}. In our final Section \ref{sConjecture} we motivate Conjecture \ref{conjIntro} and \ref{conjIntro2}.

\subsection*{Acknowledgements}
The first author gratefully acknowledges the support of Dr. Max R\"ossler, the Walter Haefner Foundation and the ETH Z\"urich Foundation. The authors would like to thank Ratko Darda for fruitful discussions.

\section{The construction}
\label{sGroup}
We fix in the rest of the paper an algebraic closure $\Q^{\text{sep}}$ of $\Q$ and for each place $v$ an algebraic closure $\Q_v^{\text{sep}}$ of $\Q_v$. We furthermore fix an embedding
$$
i_v: \Q^{\text{sep}} \rightarrow \Q_v^{\text{sep}},
$$
providing us with an embedding
$$
i_v^\ast:G_{\Q_v} \rightarrow G_{\Q}.
$$
We denote by $\mathcal{G}(3)$ the pro-$3$ completion of a profinite group $\mathcal{G}$. We fix a choice of the cyclotomic character
$$
\chi_{\text{cyc}}(3):G_{\Q} \twoheadrightarrow \mathbb{Z}_3.
$$
For each prime number $p$ congruent to $1$ modulo $3$, we fix an element $\sigma_p' \in G_{\Q_p}$ such that its image in $G_{\Q_p}(3)$ is a topological generator of the inertia subgroup. We denote by $\sigma_p$ the image of $\sigma_p'$ in $G_{\Q}(3)$ by applying $i_p^\ast$ followed by the natural projection map $G_{\Q} \rightarrow G_{\Q}(3)$. We fix any element $\sigma_3$ of $G_{\Q}(3)$ with $\chi_{\text{cyc}}(3)(\sigma_3) = 1$ and coming from the inertia subgroup of $G_{\Q_3}$ in the manner just described above. 

We denote by 
$$
\mathfrak{G} := \{\sigma_p : p \equiv 1 \bmod 3\} \cup \{\sigma_3\} \subseteq G_{\Q}(3).
$$
For each $p$ congruent to $1$ modulo $3$, we put $\chi_p: G_{\Q} \rightarrow \Z/3\Z$ to be the unique character which ramifies only at $p$ and with $\chi_p(\sigma_p) = 1$. We define $\chi_3$ to be the reduction modulo $3$ of $\chi_{\text{cyc}}(3)$. Furthermore, for each $p$ congruent to $1$ modulo $9$, we put $\psi_p: G_{\Q} \rightarrow \Z/9\Z$ to be the unique character which ramifies only at $p$ and with $\psi_p(\sigma_p) = 1$. We set $\psi_3$ to be the reduction modulo $9$ of $\chi_{\text{cyc}}(3)$. Then the set 
$$
\{\chi_p : p \equiv 0, 1 \bmod 3\}
$$
is a basis for $\text{Hom}(G_{\Q}(3), \Z/3\Z)$, which is dual to $\mathfrak{G}$. It follows that $\mathfrak{G}$ is a minimal set of topological generators for $G_{\Q}(3)$. 

For a continuous homomorphism $\psi: G_{\Q} \rightarrow \mathcal{G}$, where $\mathcal{G}$ is a profinite group, we denote by 
$$
\Q(\psi) := (\Q^{\text{sep}})^{\text{ker}(\psi)}. 
$$
In case $\mathcal{G}$ is a finite group, we denote by $\mathfrak{f}(\psi)$ the product of the finite rational primes ramifying in $\Q(\psi)/\Q$. Likewise, for $\psi: G_{\Q_p} \rightarrow \mathcal{G}$, we define $\mathfrak{f}(\psi)$ to be $p$ in case $\psi$ is ramified and $1$ in case $\psi$ is unramified. 

Consider $A_n = \Z/3\Z \oplus (\Z/9\Z)^n$. Write $\pi_0: A_n \rightarrow \Z/3\Z$ for the projection on the first $\Z/3\Z$ and write $\pi_i: A_n \rightarrow \Z/3\Z$ for the projection on the $i$-th $\Z/9\Z$ followed by the quotient map $\Z/9\Z \rightarrow \Z/3\Z$. We define
\[
\theta_i(\sigma, \tau) := \pi_0(\sigma) \cdot \pi_i(\tau) \in H^2(A_n, \Z/3\Z)
\]
and $\theta := (\theta_1, \dots, \theta_n) \in H^2(A_n, (\Z/3\Z)^n)$. Given such a $\theta$, we may attach a central extension
\[
1 \rightarrow (\Z/3\Z)^n \rightarrow G_n \rightarrow A_n \rightarrow 1.
\]
We will show, for sufficiently large $n$, that $G_n$ is a counterexample.

From now on we will consider all our abelian groups as discrete $G_\Q$-modules with trivial action. Given $\phi \in \text{Hom}(G_\Q, A_n)$, we write $\theta_{i, \phi} \in H^2(G_\Q, \Z/3\Z)$ and $\theta_\phi \in H^2(G_\Q, (\Z/3\Z)^n)$ for the inflation of $\theta_i$ and $\theta$ to $G_\Q$ using $\phi$. The following lemma is the crux of the counterexample. Informally speaking, it shows that any homomorphism $\phi: G_\Q \rightarrow A_n$ satisfying $\pi_0 \circ \phi \in \{\chi_3, 2\chi_3\}$, i.e. $\Q(\pi_0 \circ \phi) = \Q(\zeta_9 + \zeta_9^{-1})$, lifts to $G_n$. This will be the source of the abundance of $G_n$-extensions. 

\begin{lemma} 
\label{lemma: always lift}
Let $\phi \in \textup{Hom}(G_\Q, A_n)$ be such that $\Q(\pi_0 \circ \phi) = \Q(\zeta_9 + \zeta_9^{-1})$. Then there exists a homomorphism $\psi: G_\Q \rightarrow G_n$ lifting $\phi$.
\end{lemma}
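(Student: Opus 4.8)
The obstruction to lifting $\phi \colon G_\Q \to A_n$ along the central extension $1 \to (\Z/3\Z)^n \to G_n \to A_n \to 1$ is the class $\theta_\phi = (\theta_{1,\phi}, \dots, \theta_{n,\phi}) \in H^2(G_\Q, (\Z/3\Z)^n)$, so it suffices to show each $\theta_{i,\phi}$ vanishes in $H^2(G_\Q, \Z/3\Z)$. By definition $\theta_i(\sigma,\tau) = \pi_0(\sigma)\cdot\pi_i(\tau)$, so after inflation along $\phi$ we have $\theta_{i,\phi} = (\pi_0 \circ \phi) \cup (\pi_i \circ \phi)$, the cup product of the two characters $\pi_0 \circ \phi, \pi_i \circ \phi \colon G_\Q \to \Z/3\Z$ (using the standard multiplication $\Z/3\Z \otimes \Z/3\Z \to \Z/3\Z$ to interpret the cup product). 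The plan is therefore: first reduce the lifting problem to the vanishing of these $n$ cup products; then, using the hypothesis $\Q(\pi_0 \circ \phi) = \Q(\zeta_9 + \zeta_9^{-1})$, show that each cup product $(\pi_0 \circ \phi) \cup (\pi_i \circ \phi)$ is trivial.

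For the second, main step, note that $\Q(\pi_0 \circ \phi) = \Q(\zeta_9 + \zeta_9^{-1})$ means precisely that $\pi_0 \circ \phi$ is (up to sign) the mod-$3$ reduction of the cyclotomic character $\chi_{\mathrm{cyc}}(3)$ composed with $\Z_3 \twoheadrightarrow \Z/3\Z$; in the paper's notation $\pi_0 \circ \phi \in \{\chi_3, 2\chi_3\}$. The key point is that $\chi_3$ lifts to a character valued in $\Z/9\Z$ — namely $\psi_3$, the reduction mod $9$ of $\chi_{\mathrm{cyc}}(3)$ — so $\chi_3$ is in the image of $H^1(G_\Q, \Z/9\Z) \to H^1(G_\Q, \Z/3\Z)$ induced by the surjection $\Z/9\Z \twoheadrightarrow \Z/3\Z$. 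Now I would invoke the standard fact that for characters $a \in H^1(G, \Z/3\Z)$ and $b \in H^1(G, \Z/3\Z)$, the cup product $a \cup b$ vanishes if $a$ lifts to $H^1(G, \Z/9\Z)$ (equivalently, $a \cup b$ is the image of a Bockstein-type pairing, and lifting $a$ kills it): concretely, if $a = \bar{A}$ for $A \colon G \to \Z/9\Z$, choose any set-theoretic lift $\tilde{b} \colon G \to \Z/9\Z$ of $3b$ reading $b$ through $\Z/9\Z \xrightarrow{\times 3}\Z/9\Z$, and one checks that $A \cup b$ is a coboundary. Applying this with $a = \pi_0 \circ \phi$ and $b = \pi_i \circ \phi$ gives $\theta_{i,\phi} = 0$ for every $i$, hence $\theta_\phi = 0$, so $\phi$ lifts to some $\psi \colon G_\Q \to G_n$.

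The main obstacle — and the place requiring genuine care rather than formalities — is making the cohomological computation in the previous paragraph fully rigorous: identifying the inflated cocycle $\theta_{i,\phi}$ with the cup product with the correct sign and coefficient pairing, and then producing an explicit $1$-cochain whose coboundary is $\theta_{i,\phi}$ using the $\Z/9\Z$-lift of $\pi_0 \circ \phi$. This is essentially a Bockstein argument: the cup product of two mod-$p$ classes, one of which is the reduction of a mod-$p^2$ class, is trivial because the relevant connecting map in the long exact sequence of $0 \to \Z/3\Z \to \Z/9\Z \to \Z/3\Z \to 0$ annihilates it. One should also double-check that this genuinely uses the hypothesis: if $\pi_0 \circ \phi$ were instead a character not lifting mod $9$ (e.g.\ ramified at a prime $p \equiv 4, 7 \bmod 9$), the cup product need not vanish, which is exactly the asymmetry that makes $\Q(\zeta_9+\zeta_9^{-1})$ the distinguished subextension. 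Finally, since the extension is central (not merely abelian), no further obstruction beyond $H^2$ arises, so vanishing of $\theta_\phi$ is both necessary and sufficient for the existence of the lift $\psi$.
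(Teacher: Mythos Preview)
Your reduction to the vanishing of the cup products $(\pi_0\circ\phi)\cup(\pi_i\circ\phi)$ in $H^2(G_\Q,\Z/3\Z)$ is correct, but the ``standard fact'' you invoke is false: it is \emph{not} true that $a\cup b=0$ whenever $a\in H^1(G,\Z/3\Z)$ lifts to $H^1(G,\Z/9\Z)$. Already for $G=\Z/9\Z\times\Z/3\Z$ with $a$ the reduction of the first projection and $b$ the second projection, $a\cup b$ is the nonzero class $x_1x_2\in H^2(G,\FF_3)$ under the K\"unneth decomposition. More to the point, the claim fails over $G_\Q$ as well, even when \emph{both} characters lift: take two primes $p,q\equiv 1\bmod 9$ with $\chi_q(\text{Frob}_p)\neq 0$ (for instance $p=19$, $q=37$). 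Both $\chi_p$ and $\chi_q$ lift to $\Z/9\Z$-valued characters, yet the local invariant of $\chi_p\cup\chi_q$ at $p$ is nonzero: a lift of $(\chi_p,\chi_q)|_{G_{\Q_p}}$ to the Heisenberg group would force the images of tame inertia and Frobenius to commute, which they do not. The Bockstein long exact sequence only tells you that $\beta(a)=0$ when $a$ lifts; there is no mechanism by which this kills $a\cup b$ for arbitrary $b$.

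Your argument, as written, uses only that $\pi_0\circ\phi$ lifts to $\Z/9\Z$, which is strictly weaker than the hypothesis $\Q(\pi_0\circ\phi)=\Q(\zeta_9+\zeta_9^{-1})$; this should have been a warning sign. The paper's proof genuinely uses that $\pi_0\circ\phi$ is the \emph{specific} cyclotomic character $\chi_3$. It proceeds via the local--global injection $H^2(G_\Q,\Z/3\Z)\hookrightarrow\bigoplus_v H^2(G_{\Q_v},\Z/3\Z)$ and checks vanishing place by place. The only interesting places are finite $v\neq 3$ ramified in $\pi_i\circ\phi$; since $\pi_i\circ\phi$ factors through a $\Z/9\Z$-character, such $v$ must satisfy $v\equiv 1\bmod 9$, hence split completely in $\Q(\zeta_9+\zeta_9^{-1})$, so $(\pi_0\circ\phi)|_{G_{\Q_v}}=0$ and the local cocycle is identically zero. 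The arithmetic input---that ramification in the $\Z/9\Z$-factor forces splitting in the fixed $\Z/3\Z$-factor---has no purely cohomological substitute.
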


\begin{proof}
Note that $G_n$ is isomorphic to $(\Z/3\Z)^n \times_\theta A$, where multiplication is given by
\[
(c_1, a_1) \ast_\theta (c_2, a_2) = (c_1 + c_2 + \theta(a_1, a_2), a_1 + a_2).
\]
Any lift $\psi: G_\Q \rightarrow G_n$ of $\phi$ is of the shape $(\psi', \phi)$ for some map $\psi': G_\Q \rightarrow (\Z/3\Z)^n$. Imposing that $\psi$ is a homomorphism means precisely that $\theta_\phi$ is trivial in $H^2(G_\Q, (\Z/3\Z)^n)$, which is the case if and only if each $\theta_{i, \phi}$ is trivial in $H^2(G_\Q, \Z/3\Z)$.

Therefore it suffices to show that $\theta_{i, \phi}$ is trivial in $H^2(G_\Q, \Z/3\Z)$ for each $i$. By class field theory we have an injection
\[
H^2(G_\Q, \Z/3\Z) \rightarrow \bigoplus_v H^2(G_{\Q_v}, \Z/3\Z).
\]
Now let $v$ be a place of $\Q$. We will check that the restriction of $\theta_{i, \phi}$ is trivial at $v$. If $v$ is real, then $H^2(G_{\Q_v}, \Z/3\Z) = 0$. If $v = (3)$, then we also have $H^2(G_{\Q_v}, \Z/3\Z) = 0$ by a well-known result of Shafarevich. 

We define $K_i$ to be the fixed field of $\pi_{0, i} \circ \phi$, where $\pi_{0, i}: A_n \rightarrow (\Z/3\Z)^2$ is the homomorphism given by 
\[
\pi_{0, i}(a) = (\pi_0(a), \pi_i(a)). 
\]
If $v$ is unramified in $K_i$, then the restriction of $\theta_{i, \phi}$ to $G_{\Q_v}$ factors through the maximal unramified extension of $\Q_v$, therefore giving a class in $H^2(\hat{\Z}, \Z/3\Z) = 0$.

It remains to treat finite places $v$, coprime to $3$, that are ramified in $K_i$. Recall that $\theta_{i, \phi}$ is the class of the $2$-cochain $c(\sigma, \tau)$ given by
\[
(\sigma, \tau) \mapsto \pi_0(\phi(\sigma)) \cdot \pi_i(\phi(\tau)).
\]
We claim that $\pi_0(\phi(\sigma))$ is the zero map when restricted to $G_{\Q_v}$. This implies that $c(\sigma, \tau)$ is also the zero map when restricted to $G_{\Q_v}$. In particular, $\theta_{i, \phi}$ is trivial in $H^2(G_{\Q_v}, \Z/3\Z)$, and thus the lemma is a consequence of the claim.

In order to prove the claim, observe that our assumptions imply that $v$ ramifies in $\pi_i \circ \phi$ but not in $\pi_0 \circ \phi = \rho$, which is ramified only at $3$. Since $v \neq (3)$ and since $\pi_i \circ \phi$ lifts to a $\Z/9\Z$-extension, class field theory over $\Q$ shows that $v \equiv 1 \bmod 9$. But then $v$ splits completely in $\Q(\zeta_9 + \zeta_9^{-1})$, which gives the claim.
\end{proof}

\section{Finding many lifts}
\label{sLifts}
Call $q_n: G_n \rightarrow A_n$ the natural quotient map.

\begin{mydef} 
\label{def:bad set}
We define $\mathcal{G}_{n, \textup{bad}}$ to be the set of tuples $(v_g)_{g \in G_n - \{\textup{id}\}}$ satisfying the following conditions
\begin{itemize}
\item $v_g$ is a positive squarefree integer for every $g \in G_n - \{\textup{id}\}$;
\item $v_g$ and $v_h$ are coprime for every $g, h \in G_n - \{\textup{id}\}$ with $g \neq h$;
\item if $p \mid v_g$ for some $g \in G_n - \{\textup{id}\}$ satisfying $\pi_0(q_n(g)) = 0$, then $p \equiv 1 \bmod \ord(g)$;
\item we have
\[
\prod_{\substack{g \in G_n - \{\textup{id}\} \\ \pi_0(q_n(g)) \neq 0}} v_g = 3.
\]
\end{itemize}
\end{mydef}

\begin{lemma}
\label{lOrder}
Let $g \in G_n$ and suppose that $q_n(g) \neq 0$. Then $\ord(g) = \ord(q_n(g))$.
\end{lemma}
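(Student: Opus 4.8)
The plan is to compare $\ord(g)$ with $N := \ord(q_n(g))$ from both sides. One divisibility is immediate: since $q_n$ is a homomorphism, $q_n(g)^{\ord(g)} = \textup{id}$, so $N \mid \ord(g)$. It therefore suffices to prove $g^N = \textup{id}$. Since $q_n(g) \neq 0$ and $A_n = \Z/3\Z \oplus (\Z/9\Z)^n$ has exponent $9$, we have $N \in \{3, 9\}$, and I would treat the two cases separately after establishing a power formula.

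Concretely, I would work in the explicit model $G_n \cong (\Z/3\Z)^n \times_\theta A_n$ recorded in the proof of Lemma \ref{lemma: always lift}, writing $g = (c, a)$ with $a = q_n(g)$. An easy induction on $m$, using the multiplication law $(c_1, a_1) \ast_\theta (c_2, a_2) = (c_1 + c_2 + \theta(a_1, a_2),\, a_1 + a_2)$ and the base case $g^1 = (c,a)$, gives
$$
g^m = \Big( mc + \sum_{j=1}^{m-1} \theta(ja, a),\ ma \Big).
$$
Because each component $\theta_i(\sigma, \tau) = \pi_0(\sigma)\,\pi_i(\tau)$ is additive in $\sigma$ through the homomorphism $\pi_0$, we get $\theta_i(ja, a) = j\,\pi_0(a)\,\pi_i(a)$ in $\Z/3\Z$, hence $\sum_{j=1}^{m-1}\theta_i(ja,a) = \binom{m}{2}\pi_0(a)\pi_i(a)$. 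Writing $w := (\pi_0(a)\pi_i(a))_{i=1}^{n} \in (\Z/3\Z)^n$, this yields $g^m = \big(mc + \binom{m}{2} w,\ ma\big)$.

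Now the case analysis. If $N = 9$, then $9a = 0$, while $9c = 0$ (as $3c = 0$) and $\binom{9}{2} = 36 \equiv 0 \bmod 3$, so the power formula gives $g^9 = (0,0) = \textup{id}$. If $N = 3$, then $3a = 0$ in $A_n$, which forces each $\Z/9\Z$-coordinate of $a$ to lie in $3\Z/9\Z$; since $\pi_i$ is the reduction of that coordinate modulo $3$, this gives $\pi_i(a) = 0$ for every $i$, so $w = 0$. Together with $3c = 0$ and $3a = 0$ the power formula yields $g^3 = (0,0) = \textup{id}$. In either case $\ord(g) \mid N$, and combined with $N \mid \ord(g)$ this proves $\ord(g) = N = \ord(q_n(g))$.

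I do not expect a serious obstacle here: the statement is essentially a direct computation in the cocycle model. The only points requiring a moment's care are (i) checking that $\theta$ is normalized, i.e. $\theta_i(0,\tau) = \theta_i(\sigma,0) = 0$, so that $(0,0)$ is genuinely the identity of the model and the power formula is valid as stated, and (ii) the $N = 3$ case, whose crux is the observation that $\pi_i$ annihilates the $3$-torsion of the $i$-th $\Z/9\Z$ factor, so that the obstructing cocycle term $w$ vanishes exactly when it must.
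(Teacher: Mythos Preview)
Your proof is correct and follows essentially the same approach as the paper: compute $g^m$ in the cocycle model and show the accumulated cocycle term vanishes. The paper streamlines your case split by observing uniformly that $\sum_{j=1}^{m-1} j = \binom{m}{2} \equiv 0 \pmod 3$ for $m \in \{3,9\}$, so your separate argument that $w = 0$ when $N = 3$ is unnecessary (since $\binom{3}{2} = 3$ already kills the term).
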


\begin{proof}
Write $G_n = (\Z/3\Z)^n \times_\theta A_n$, write $g = (c, q_n(g))$ and write $m = \ord(q_n(g))$. Then a calculation using the group law on $G_n$ given by $\theta$ shows that
\[
g^m = \left(\sum_{j = 1}^{m - 1} \theta(q_n(g), q_n(g)^j), 0\right).
\]
Now observe that
\[
\sum_{j = 1}^{m - 1} \theta(q_n(g), q_n(g)^j) = \left(\sum_{j = 1}^{m - 1} \pi_0(q_n(g)) \cdot \pi_1(q_n(g)^j), \dots, \sum_{j = 1}^{m - 1} \pi_0(q_n(g)) \cdot \pi_n(q_n(g)^j)\right).
\]
Since we have $\sum_{j = 1}^{m - 1} \pi_i(q_n(g)^j) = \pi_i(q_n(g)) \sum_{j = 1}^{m - 1} j = 0$, the lemma follows.
\end{proof}

Following the Koymans--Pagano parametrization technique \cite{KPMalle1} yields the following key result. For the convenience of the reader we give a direct proof of this special case from scratch. 

\begin{theorem}
\label{tParGn}
There is a bijection $\textup{Par}$ between $\mathcal{G}_{n, \textup{bad}}$ and the subset of $\psi \in \textup{Hom}(G_\Q, G_n)$ such that $\Q(\pi_0 \circ q_n \circ \psi) = \Q(\zeta_9 + \zeta_9^{-1})$. Writing $\mathfrak{f}(\psi)$ for the product of ramified primes of a homomorphism $\psi$, we have
\begin{align}
\label{eProd}
\mathfrak{f}(\textup{Par}((v_g)_{g \in G_n - \{\textup{id}\}})) = \prod_{g \in G_n - \{\textup{id}\}} v_g.
\end{align}
Furthermore, the homomorphism $\textup{Par}((v_g)_{g \in G_n - \{\textup{id}\}}): G_\Q \rightarrow G_n$ is surjective if and only if
\begin{align}
\label{eEpi}
\langle q_n(\{g \in G_n - \{\textup{id}\} : v_g \neq 1\}) \rangle = A_n.
\end{align}
\end{theorem}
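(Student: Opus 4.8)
The plan is to build the parametrization $\mathrm{Par}$ explicitly from a tuple $(v_g)_{g \in G_n - \{\mathrm{id}\}} \in \mathcal{G}_{n,\mathrm{bad}}$, using the characters $\chi_p$ and $\psi_p$ fixed in Section \ref{sGroup}, and then verify the four claims (well-definedness, bijectivity, the product formula (\ref{eProd}), and the surjectivity criterion (\ref{eEpi})) in that order. First I would construct the abelian part: to each prime $p \mid v_g$ I assign, via the structure of $A_n = \Z/3\Z \oplus (\Z/9\Z)^n$, a local character at $p$ whose ramification ``points in the direction'' of $q_n(g) \in A_n$ — concretely, for $p \equiv 1 \bmod 9$ one uses $\psi_p$ to realize an element of order $9$ and for $p \equiv 1 \bmod 3$ (but $p \not\equiv 1 \bmod 9$) one uses $\chi_p$; the third bullet of Definition \ref{def:bad set} (that $p \equiv 1 \bmod \ord(g)$ whenever $\pi_0(q_n(g)) = 0$) together with Lemma \ref{lOrder} guarantees that this is possible, i.e. that a prime tamely ramifying with the required local order actually exists in the relevant residue class. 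The fourth bullet forces the ``$\pi_0$-direction'' to be ramified only at $3$ and to have order exactly $3$, which is exactly the condition $\Q(\pi_0 \circ q_n \circ \psi) = \Q(\zeta_9 + \zeta_9^{-1})$; here one uses $\chi_3$ (the mod-$3$ cyclotomic character), and one must check the contribution at $3$ is consistent. Summing these local characters over all primes dividing all the $v_g$ produces a homomorphism $\phi: G_\Q \to A_n$ with $\Q(\pi_0 \circ \phi) = \Q(\zeta_9 + \zeta_9^{-1})$.

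Next I would lift $\phi$ to $G_n$: by Lemma \ref{lemma: always lift} a lift $\psi: G_\Q \to G_n$ exists, and since $\ker(q_n) = (\Z/3\Z)^n$, the set of lifts is a torsor under $\mathrm{Hom}(G_\Q, (\Z/3\Z)^n)$. To pin down a \emph{canonical} lift — which is what makes $\mathrm{Par}$ a well-defined bijection rather than a multivalued map — I would impose a normalization on the local behaviour of $\psi$ at each ramified prime, matching the Frobenius/inertia data recorded by the index $g$ rather than merely by $q_n(g)$; the coprimality of the $v_g$ (second bullet) is what lets these local normalizations be chosen independently at distinct primes without collision. The inverse map is then read off from the ramification data of a given $\psi$: at each prime $p$ ramifying in $\Q(\psi)$, the inertia generator (suitably normalized via $\sigma_p$) lands on a well-defined nonidentity element $g \in G_n$ by Lemma \ref{lOrder}, and one sets $p \mid v_g$ accordingly; checking $\mathrm{Par}$ and this assignment are mutually inverse is a matter of unwinding the definitions. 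The product formula (\ref{eProd}) is then immediate, since each prime dividing some $v_g$ contributes exactly one ramified prime to $\mathfrak{f}(\psi)$ and distinct $g$'s contribute coprime (hence disjoint) sets of primes.

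Finally, for the surjectivity criterion (\ref{eEpi}): $\psi$ is surjective onto $G_n$ if and only if its image surjects onto $A_n$ (because $G_n \to A_n$ has Frattini kernel — $[G_n, G_n] \subseteq \ker(q_n)$ and $\ker(q_n) = (\Z/3\Z)^n$ is contained in the Frattini subgroup, as follows from the $3$-group structure), so it suffices to control the image of $\phi = q_n \circ \psi$ in $A_n$. The image of $\phi$ is generated by the images of the local inertia generators at the ramified primes together with the $\pi_0$-direction coming from $3$; the $\pi_0$-direction is always present (by the fourth bullet it equals all of $\Z/3\Z$ in the first coordinate), and the remaining generators are exactly the $q_n(g)$ with $v_g \neq 1$. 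Hence $\phi$ is surjective iff these generate $A_n$, which is (\ref{eEpi}). The main obstacle I anticipate is the middle step: making the choice of canonical lift genuinely canonical and local-compatible so that $\mathrm{Par}$ is a bijection on the nose — this requires care in aligning the cocycle-splitting from Lemma \ref{lemma: always lift} with the chosen topological generators $\sigma_p$, and in verifying that changing the lift by an element of $\mathrm{Hom}(G_\Q, (\Z/3\Z)^n)$ changes the associated tuple, so that the normalization cuts out exactly one lift per tuple. The behaviour at the prime $3$ (where the $\pi_0$-direction ramifies but the inertia image is constrained by $\chi_{\mathrm{cyc}}(3)(\sigma_3) = 1$) is the most delicate point to get exactly right.
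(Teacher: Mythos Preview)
Your proposal is correct and follows essentially the same strategy as the paper, with the cosmetic difference that the paper constructs the inverse map $\mathrm{Ev}$ first (sending $\psi$ to the tuple defined by $p \mid v_g \Leftrightarrow \psi(\sigma_p) = g$), which makes injectivity immediate from the fact that $\mathfrak{G}$ topologically generates $G_{\Q}(3)$, and then proves surjectivity of $\mathrm{Ev}$ via exactly the lift-and-normalize construction you outline. The normalization step you correctly flag as the main obstacle is handled in the paper by an explicit ``clean--twist'' procedure: given any lift $((\phi_i)_i, \psi^{\mathrm{ab}})$ produced by Lemma~\ref{lemma: always lift}, one first subtracts characters $\phi_i(\sigma_p)\cdot\chi_p$ to force each $\phi_i$ to vanish on all of $\mathfrak{G}$, and then adds back $\sum_{g}\rho_i(g)\sum_{p\mid v_g}\chi_p$ to impose the prescribed values $\psi(\sigma_p)=g$; this is precisely the mechanism by which twisting by $\mathrm{Hom}(G_\Q,(\Z/3\Z)^n)$ singles out one lift per tuple.
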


\begin{proof}
Let us define $\text{Hom}_{n, \text{bad}}$ to be the set of continuous homomorphisms $\psi: G_{\Q}(3) \rightarrow G_n$ with the property that
$$
\Q(\pi_0 \circ q_n \circ \psi) = \Q(\zeta_9 + \zeta_9^{-1}).
$$
We begin by constructing a map $\text{Ev}: \text{Hom}_{n, \text{bad}} \rightarrow \mathcal{G}_{n, \text{bad}}$ in the following manner. Given $\psi \in \text{Hom}_{n, \text{bad}}$, we consider the vector
$$
(v_g)_{g \in G_n - \{\text{id}\}}
$$
consisting of positive squarefree numbers divisible only by primes congruent to $0, 1$ modulo $3$ uniquely defined through the property
$$
p \mid v_g \Longleftrightarrow \psi(\sigma_p) = g.
$$
Let us check that $\text{Ev}$ indeed maps $\text{Hom}_{n, \text{bad}}$ to $\mathcal{G}_{n,\text{bad}}$. Since $\psi$ is a function, it follows that the vector $(v_g)_{g \in G_n - \{\text{id}\}}$ consists of pairwise coprime squarefree integers by construction. In other words, the first two points of Definition \ref{def:bad set} are taken care of. Let us verify the third point and distinguish for that purpose two cases. Suppose first that $q_n(g) = 0$. Then $\text{ord}(g) = 3$, so that the condition becomes $p \equiv 1 \bmod 3$, which is automatically satisfied as primes $p \equiv 2 \bmod 3$ are unramified in $3$-extensions. Suppose now that $q_n(g) \neq 0$. Then by Lemma \ref{lOrder} we know that $\text{ord}(g) = \text{ord}(q_n(g))$. Hence we need to show that $p \equiv 1 \bmod \text{ord}(q_n(g))$. The map $q_n \circ \psi \circ i_p^\ast$ induces a continuous homomorphism
$$
G_{\Q_p} \rightarrow G_{\Q_p}^{\text{ab}} \rightarrow A_n,
$$
sending $\sigma_p'$ to $q_n(g)$. But the order of the image of $\sigma_p'$ in an abelian extension of $\Q_p$ is always a divisor of $p - 1$ so that $p \equiv 1 \bmod \text{ord}(q_n(g))$ as desired.

Let us now show that the vector $(v_g)_{g \in G_n-\{\text{id}\}}$ satisfies the fourth point of Definition \ref{def:bad set}. Observe that for each prime $p \equiv 1 \bmod 3$ we have $\pi_0 \circ q_n \circ \psi(\sigma_p) \in \{\chi_3(\sigma_p), 2 \cdot \chi_3(\sigma_p)\} = \{0\}$. It follows that the variables $v_g$, with $\pi_0(q_n(g)) \neq 0$, are all equal to $1$ or $3$. Since we have shown that they are pairwise coprime, at most one of them equals $3$, namely $\psi(\sigma_3)$. This gives the desired fourth point of Definition \ref{def:bad set}.

We will now show that equation (\ref{eProd}) holds. Indeed, $3$ is certainly both a divisor of $\mathfrak{f}(\psi)$ and $\prod_{g \in G_n-\{\text{id}\}} v_g$, since the extension ramifies at $3$ and we have just verified the fourth bullet point of Definition \ref{def:bad set}. For a prime $p \equiv 1 \bmod 3$, we have that the ramification index of $\Q^{\text{ker}(\psi)}/\Q$ at $p$ equals precisely the order of $\psi(\sigma_p)$. Primes congruent to $2$ modulo $3$ are always unramified in a $3$-extension.  

We now observe that $\text{Ev}(\psi)$ determines the value of $\psi$ on $\mathfrak{G}$, which is a set of topological generators, and therefore $\text{Ev}(\psi)$ determines $\psi$. In other words, the map $\text{Ev}$ is injective. Furthermore, the fact that $\mathfrak{G}$ is a set of topological generators gives at once that 
$$
\text{im}(\psi) := \langle \psi(\mathfrak{G}) \rangle = \langle \{g \in G_n-\{\text{id}\} : v_g \neq 1\} \rangle.
$$
In particular, $\psi$ is surjective if and only if $G_n = \langle \{g \in G_n - \{\text{id}\} : v_g \neq 1\} \rangle$. Recall that a set generates a nilpotent group if and only if it generates the abelianization. Then the elementary observation that $A_n$ is the abelianization of $G_n$ yields equation (\ref{eEpi}). 

We are only left with showing that $\text{Ev}$ is surjective. We will proceed in $3$ steps. Let $(v_g)_{g \in G_n-\{\text{id}\}}$ be in $\mathcal{G}_{n,\text{bad}}$. \\
\emph{Step 1:} Define $\psi^{\text{ab}} := (\psi_i^{\text{ab}})_{i = 0}^n: G_{\Q} \rightarrow A_n$ using the formula
$$
\psi_i^{\text{ab}} := 
\begin{cases}
\pi_0(q_n(g_0)) \cdot \chi_3 &\text{for } i = 0 \\
\sum_{g \in G_n - \{\text{id}\}} \sum_{p \mid v_g} \mu_i(g) \cdot \psi_p &\text{for } 1 \leq i \leq n,
\end{cases}
$$
where $g_0$ is the unique element of $G_n - \{\text{id}\}$ with $v_{g_0} = 3$, where $\mu_i: G_n \rightarrow A_n \rightarrow \Z/9\Z$ is the projection map and where $\cdot$ denotes the usual multiplication in $\Z/3\Z$ respectively $\Z/9\Z$. \\ 
\emph{Step 2:} We have $\pi_0 \circ \psi^{\text{ab}} = \pi_0(q_n(g_0)) \cdot \chi_3$ by construction. Therefore we have that $\psi^{\text{ab}}$ can be lifted to a homomorphism $\psi:G_{\Q} \rightarrow G_n$ thanks to Lemma \ref{lemma: always lift}. The choice of such a lift consists precisely of the choice of a vector of continuous $1$-cochains $(\phi_i)_{i = 1}^n$ with each $\phi_i: G_\Q \rightarrow \Z/3\Z$ fulfilling the property
$$
d\phi_i(\sigma, \tau) = \theta_i(\psi^{\text{ab}}(\sigma), \psi^{\text{ab}}(\tau)) = \theta_{i, \psi^{\text{ab}}}(\sigma, \tau),
$$
where $d\phi_i(\sigma, \tau) = \phi_i(\sigma \tau) - \phi_i(\sigma) - \phi_i(\tau)$. Indeed, such a vector completes the map $\psi^{\text{ab}}$ into a set-theoretic map $((\phi_i)_{i = 1}^n, \psi^{\text{ab}}): G_{\Q} \rightarrow G_n$, which is a group homomorphism precisely owing to the cocycle equation. Conversely, given a lift $\psi$, we have that $\phi_i := \rho_i \circ \psi$ provides the desired $1$-cochains, where $\rho_i: G_n \rightarrow \Z/3\Z$ is the projection on the $i$-th $\Z/3\Z$ in $(\Z/3\Z)^n \times_\theta A$ (note that $\rho_i$ is not a group homomorphism).

We next define
$$
\phi_i(\text{clean}) := \phi_i - \phi_i(\sigma_3) \cdot \chi_3 - \sum_{p \equiv 1 \bmod 3} \phi_i(\sigma_p) \cdot \chi_p,
$$
which now vanishes at all the elements of $\mathfrak{G}$ (observe that the sum is finite, since $\phi_i$ vanishes on all but finitely many $\sigma_p$ by continuity). \\
\emph{Step 3:} We now define
$$
\phi_i(\text{twist}) : =\phi_i(\text{clean}) + \sum_{g \in G_n - \{\text{id}\}} \rho_i(g) \cdot \sum_{p \mid v_g} \chi_p.
$$
This gives us a homomorphism
$$
\psi(\text{twist}) := ((\phi_i(\text{twist}))_{i = 1}^n, \psi^{\text{ab}}): G_{\Q} \rightarrow G_n
$$
satisfying by construction that $\psi(\text{twist})(\sigma_p) = g$ if and only if $p \mid v_g$. Therefore
$$
\text{Ev}(\psi(\text{twist})) = (v_g)_{g \in G_n-\{\text{id}\}}.
$$
This shows that the map $\text{Ev}$ is also surjective. Putting $\text{Par} := \text{Ev}^{-1}$ finishes the proof.
\end{proof}

\noindent The properties (\ref{eProd}) and (\ref{eEpi}) are the core features of the Koymans--Pagano parametrization method \cite{KPMalle1}. Using Granville--Koukoulopoulos \cite[Theorem 1]{GK}, we get the following result.

\begin{theorem} 
\label{th: counterexample}
We have
\[
\sum_{\substack{\psi \in \textup{Hom}(G_\Q, G_n) \\ \pi_0 \circ q_n \circ \psi \in \{\chi_3, 2\chi_3\} \\ \mathfrak{f}(\psi) \leq X}} 1 \sim 2 \cdot 27^n \cdot \frac{X  \cdot (\log X)^{\alpha - 1}}{3 \cdot \Gamma(\alpha)} \cdot c_0,
\]
where
\[
\alpha := \sum_{\substack{g \in G_n - \{\textup{id}\} \\ \pi_0(q_n(g)) = 0}} \frac{1}{\varphi(\textup{ord}(g))} = \frac{3^n - 1}{2} + 3^n \cdot \left(\frac{9^n - 3^n}{6} + \frac{3^n - 1}{2}\right) = \frac{9^n - 1}{3} + \frac{27^n - 1}{6}
\]
and $c_0$ equals the conditionally convergent Euler product
\[
c_0 := \prod_p \left(1 + \frac{(9^n - 1)\mathbf{1}_{p \equiv 4, 7 \bmod 9} + (27^n - 1)\mathbf{1}_{p \equiv 1 \bmod 9}}{p}\right) \left(1 - \frac{1}{p}\right)^\alpha.
\]
The same result is true if $\textup{Hom}(G_\Q, G_n)$ is replaced by $\textup{Epi}(G_\Q, G_n)$.
\end{theorem}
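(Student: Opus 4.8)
The plan is to feed the parametrization of Theorem \ref{tParGn} into a mean-value theorem for nonnegative multiplicative functions. By Theorem \ref{tParGn}, together with the conductor formula (\ref{eProd}), the sum in question equals the number of tuples $(v_g)_{g \in G_n - \{\textup{id}\}} \in \mathcal{G}_{n, \textup{bad}}$ with $\prod_g v_g \le X$ (in the $\textup{Epi}$ case, subject also to (\ref{eEpi})); note that $\Q(\pi_0 \circ q_n \circ \psi) = \Q(\zeta_9 + \zeta_9^{-1})$ is exactly the condition $\pi_0 \circ q_n \circ \psi \in \{\chi_3, 2\chi_3\}$. First I would isolate the contribution of the last bullet of Definition \ref{def:bad set}: among the $g$ with $\pi_0(q_n(g)) \ne 0$, exactly one element $g_0$ has $v_{g_0} = 3$ and all others have $v_g = 1$, so there are $|\{g \in G_n - \{\textup{id}\} : \pi_0(q_n(g)) \ne 0\}| = \tfrac23 |G_n| = 2 \cdot 27^n$ choices of $g_0$, and the remaining data is a tuple $(v_g)_{g \in K - \{\textup{id}\}}$, with $K := \ker(\pi_0 \circ q_n)$, whose entries are pairwise coprime squarefree integers satisfying $p \mid v_g \Rightarrow p \equiv 1 \bmod \ord(g)$ and $\prod_{g \in K - \{\textup{id}\}} v_g \le X/3$.

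Next I would recognize this inner count as a summatory function. The product $m := \prod_{g \in K - \{\textup{id}\}} v_g$ is squarefree and coprime to $3$, and choosing a valid tuple with this product is the same as choosing, independently for each prime $p \mid m$, an element $g \in K - \{\textup{id}\}$ with $p \equiv 1 \bmod \ord(g)$. Hence the inner count is $\sum_{m \le X/3} f(m)$ with $f$ the multiplicative function supported on squarefree integers and $f(p) = |\{g \in K - \{\textup{id}\} : p \equiv 1 \bmod \ord(g)\}|$. By Lemma \ref{lOrder} and the fact that $\ker \pi_0 = (\Z/9\Z)^n$ has exponent $9$ (elements of $K$ with $q_n(g) = 0$ having order $3$), every element of $K - \{\textup{id}\}$ has order $3$ or $9$; counting them by order gives $9^n - 1$ of order $3$ and $27^n - 9^n$ of order $9$, whence $f(p) = 27^n - 1$ for $p \equiv 1 \bmod 9$, $f(p) = 9^n - 1$ for $p \equiv 4, 7 \bmod 9$, $f(p) = 0$ otherwise, and $\sum_{g \in K - \{\textup{id}\}} \varphi(\ord(g))^{-1} = \tfrac{9^n-1}{2} + \tfrac{27^n - 9^n}{6} = \alpha$. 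The prime number theorem in arithmetic progressions modulo $9$ then yields $\sum_{p \le x} f(p) \log p \sim \alpha x$.

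I would then invoke Granville--Koukoulopoulos \cite[Theorem 1]{GK}, whose hypotheses hold here (the mean condition on $f(p) \log p$ just recorded, the prime-power condition being vacuous since $f$ is supported on squarefrees, and $f$ bounded for fixed $n$): it gives $\sum_{m \le Z} f(m) \sim \tfrac{Z (\log Z)^{\alpha - 1}}{\Gamma(\alpha)} \prod_p \bigl(1 + \tfrac{f(p)}{p}\bigr)\bigl(1 - \tfrac1p\bigr)^{\alpha}$, the Euler product being exactly $c_0$ and converging conditionally because $\sum_p \tfrac{f(p) - \alpha}{p}$ converges by the same equidistribution of primes modulo $9$. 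Taking $Z = X/3$, using $(\log(X/3))^{\alpha - 1} \sim (\log X)^{\alpha - 1}$, and multiplying by the $2 \cdot 27^n$ choices of $g_0$ gives the asserted asymptotic for $\textup{Hom}(G_\Q, G_n)$.

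For the $\textup{Epi}$ version it remains to show the non-surjective homomorphisms contribute a lower-order term. By (\ref{eEpi}) and the fact that $q_n(g_0)$ already surjects onto $\Z/3\Z$ via $\pi_0$, a homomorphism $\textup{Par}((v_g))$ in the count fails to be surjective only if $\{q_n(g) : g \in K - \{\textup{id}\},\ v_g \ne 1\}$ does not generate $\ker \pi_0 = (\Z/9\Z)^n$, hence is contained in one of the finitely many maximal subgroups $B \subsetneq (\Z/9\Z)^n$. Repeating the analysis above with $K - \{\textup{id}\}$ replaced by $\{g \in K - \{\textup{id}\} : q_n(g) \in B\}$ produces a multiplicative function $f_B \le f$ with strictly smaller mean $\alpha_B < \alpha$, so the corresponding count is $\ll X (\log X)^{\alpha_B - 1} = o(X (\log X)^{\alpha - 1})$, and a union bound over the (finitely many) $B$ concludes. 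The step I expect to be the main obstacle is the application of \cite[Theorem 1]{GK}: verifying its hypotheses and, above all, carefully identifying the constant and logarithmic exponent it outputs with the explicit $c_0$ and $\alpha$ in the statement, including the conditional nature of the Euler product; by contrast the reduction to a multiplicative sum and the surjectivity estimate are routine bookkeeping.
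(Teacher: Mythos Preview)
Your proposal is correct and follows essentially the same route as the paper: both reduce via Theorem~\ref{tParGn} to the sum $2\cdot 27^n \sum_{m\le X/3} f(m)$ for the same multiplicative $f$, and then invoke \cite[Theorem~1]{GK}. The only variation is in the $\textup{Epi}$ step: the paper uses an inclusion--exclusion over subsets $S\subseteq T_n$ (showing that all pieces with $S\neq\varnothing$ are dominated by some $N_2(X,S')$ with strictly smaller logarithmic exponent), whereas you bound the non-surjective contribution by a union over the finitely many maximal subgroups $B\subsetneq(\Z/9\Z)^n$ --- both arguments reduce to applying \cite{GK} to a strictly smaller multiplicative function, so the difference is purely organizational.
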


\begin{proof}
We will first prove the $\textup{Hom}(G_\Q, G_n)$ case. By Theorem \ref{tParGn} we have
\[
\sum_{\substack{\psi \in \textup{Hom}(G_\Q, G_n) \\ \pi_0 \circ q_n \circ \psi \in \{\chi_3, 2\chi_3\} \\ \mathfrak{f}(\psi) \leq X}} 1 = \sum_{\substack{(v_g)_{g \in G_n - \{\text{id}\}} \in \mathcal{G}_{n, \text{bad}} \\ \prod_{g \in G_n - \{\text{id}\}} v_g \leq X}} 1.
\]
Recall that
\[
\prod_{\substack{g \in G_n - \{\textup{id}\} \\ \pi_0(q_n(g)) \neq 0}} v_g = 3.
\]
Write $T_n$ for the subset of $g \in G_n - \{\text{id}\}$ with $\pi_0(q_n(g)) = 0$. Since there are $2 \cdot 27^n$ elements $g \in G_n - \{\textup{id}\}$ with $\pi_0(q_n(g)) \neq 0$, we obtain that
\[
\sum_{\substack{(v_g)_{g \in G_n - \{\text{id}\}} \in \mathcal{G}_{n, \text{bad}} \\ \prod_{g \in G_n - \{\text{id}\}} v_g \leq X}} 1 = 2 \cdot 27^n  \sum_{\substack{(v_g)_{g \in T_n} \\ \prod_{g \in T_n} v_g \leq \frac{X}{3} \\ p \mid v_g \Rightarrow p \equiv 1 \bmod \ord(g)}} \mu^2\left(\prod_{g \in T_n} v_g\right).
\]
Writing $m := \prod_{g \in T_n} v_g$, this transforms the sum into
\[
2 \cdot 27^n \sum_{\substack{(v_g)_{g \in T_n} \\ \prod_{g \in T_n} v_g \leq \frac{X}{3} \\ p \mid v_g \Rightarrow p \equiv 1 \bmod \ord(g)}} \mu^2\left(\prod_{g \in T_n} v_g\right) = 2 \cdot 27^n \sum_{m \leq X/3} f(m),
\]
where $f(m)$ is the multiplicative function supported on squarefree integers and given on primes by
\[
f(p) = (9^n - 1)\mathbf{1}_{p \equiv 4, 7 \bmod 9} + (27^n - 1)\mathbf{1}_{p \equiv 1 \bmod 9}
\]
thanks to Lemma \ref{lOrder}. The average of $f$ on primes is equal to $\alpha$, and the theorem now follows from \cite[Theorem 1]{GK}.

To deal with $\textup{Epi}(G_\Q, G_n)$, let $S$ be a subset of $T_n$ and consider the subsum
\begin{align}
\label{eSurj}
N_1(X, S) := \sum_{\substack{(v_g)_{g \in T_n} \\ \prod_{g \in G_n - \{\text{id}\}} v_g \leq \frac{X}{3} \\ p \mid v_g \Rightarrow p \equiv 1 \bmod \ord(g) \\ v_g = 1 \Longleftrightarrow g \in S}} \mu^2\left(\prod_{g \in T_n} v_g\right).
\end{align}
This dissects the original sum into $2^{|T_n|}$ subsums. Furthermore, $S$ determines whether the resulting map will be surjective or not thanks to Theorem \ref{tParGn}. Following the argument for the homomorphism case, one may use \cite[Theorem 1]{GK} to extract an asymptotic for sums of the shape
\[
N_2(X, S) := \sum_{\substack{(v_g)_{g \in T_n} \\ \prod_{g \in G_n - \{\text{id}\}} v_g \leq \frac{X}{3} \\ p \mid v_g \Rightarrow p \equiv 1 \bmod \ord(g) \\ g \in S \Rightarrow v_g = 1}} \mu^2\left(\prod_{g \in T_n} v_g\right)
\]
for every subset $S$ of $T_n$. By \cite[Theorem 1]{GK}, we see that this sum is the largest when $S = \varnothing$, and (asymptotically) strictly smaller otherwise. But the sums in equation (\ref{eSurj}) are linear combinations of such sums. More precisely, there holds
\[
N_1(X, S) = \sum_{S \subseteq S'} (-1)^{|S'| - |S|} N_2(X, S').
\]
This includes the term $N_2(X, \varnothing)$ if and only if $S = \varnothing$, which all correspond to epimorphisms.  This proves the theorem.
\end{proof}

\section{Counting conjugacy classes}
\label{sConjugacy}
Define for $g \in G$ and $\alpha \in (\Z/\ord(g)\Z)^\ast$ the set
\[
S_{g, \alpha} := \{h \in G : h g h^{-1} = g^\alpha\}.
\]
Informally, one may think of $S_{g, \alpha}$ as the admissible Frobenius elements given that an inertia element is sent to $g$. Given $g \in G$ and a number field $k$, we may canonically identify $\Gal(k(\zeta_{\ord(g)})/k)$ as a subgroup of $(\Z/\ord(g)\Z)^\ast$. We will write this group as $T(g, k)$. The next proposition gives an explicit formula for the naive Malle constant in terms of $S_{g, \alpha}$.

\begin{proposition}
We have
\begin{align}
\label{eMalleConstant}
b(G, k) := -1 + \sum_{g \in G - \{\textup{id}\}} \frac{1}{[k(\zeta_{\ord(g)}) : k]} \sum_{\alpha \in T(g, k)} \frac{|S_{g, \alpha}|}{|G|}.
\end{align}    
\end{proposition}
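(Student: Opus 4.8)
The plan is to recover the formula for $b(G,k)$ by counting the equivalence classes of $G - \{\mathrm{id}\}$ under the relation $\sim$ appearing in Conjecture \ref{cMalle}, using the orbit-counting lemma (Burnside). First I would unwind the definition of $\sim$: two elements $g, h \in G - \{\mathrm{id}\}$ are declared equivalent when $\mathrm{Cl}(g)$ and $\mathrm{Cl}(h)$ lie in the same orbit under the cyclotomic action of $k$. Concretely, this means there is an element $\gamma \in G$ and an $\alpha$ in the image $T(g,k) \subseteq (\Z/\ord(g)\Z)^\ast$ of $\Gal(k(\zeta_{\ord(g)})/k)$ (note $\ord$ is conjugation-invariant, so this makes sense on conjugacy classes) with $h = \gamma g^{\alpha} \gamma^{-1}$. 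In other words, the equivalence classes of $\sim$ are exactly the orbits of the action of the group $\Gamma_g := G \times T(g,k)$ — more precisely a suitable groupoid, since $T(g,k)$ depends on $\ord(g)$, but $\ord$ is constant on each orbit so there is no real issue — acting on $G - \{\mathrm{id}\}$ by $(\gamma, \alpha) \cdot g = \gamma g^\alpha \gamma^{-1}$. One checks this is a genuine action: it is well-defined because if $\alpha \equiv \alpha' \bmod \ord(g)$ then $g^\alpha = g^{\alpha'}$, and composition works because conjugation commutes appropriately with taking powers.

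The main step is then to apply Burnside's lemma orbit by orbit, or rather to partition $G - \{\mathrm{id}\}$ into level sets of $\ord$ and apply it on each. Fix a value $m$ and let $X_m = \{g \in G - \{\mathrm{id}\} : \ord(g) = m\}$, which is a union of $\sim$-orbits and is acted on by $\Gamma_m := G \times T$, where $T = T(g,k)$ for any $g$ with $\ord(g)=m$ (this depends only on $m$). By Burnside, the number of orbits in $X_m$ equals
\[
\frac{1}{|G| \cdot |T|} \sum_{(\gamma, \alpha) \in \Gamma_m} |\{g \in X_m : \gamma g^\alpha \gamma^{-1} = g\}|.
\]
I would then reorganize this double sum: sum first over $g$, then over $(\gamma,\alpha)$ fixing $g$. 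For a fixed $g$ with $\ord(g) = m$, the pairs $(\gamma,\alpha) \in G \times T$ with $\gamma g^\alpha \gamma^{-1} = g$ are exactly the pairs with $\gamma \in S_{g,\alpha}$, so their number is $\sum_{\alpha \in T} |S_{g,\alpha}|$. Summing over $g \in X_m$ and dividing, and then summing the resulting quantity over all $m$, gives
\[
n = \sum_{g \in G - \{\mathrm{id}\}} \frac{1}{|G| \cdot [k(\zeta_{\ord(g)}):k]} \sum_{\alpha \in T(g,k)} |S_{g,\alpha}|,
\]
using $|T(g,k)| = [k(\zeta_{\ord(g)}):k]$; since $b(G,k) = -1 + n$ by \eqref{eMalleb}, rearranging gives exactly \eqref{eMalleConstant}.

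The only genuine subtlety — and what I would treat most carefully — is the bookkeeping around $\ord$ and $T(g,k)$ not being globally defined: one must partition by order before invoking Burnside, and check that each $X_m$ is indeed $\Gamma_m$-stable (it is, since $\ord(\gamma g^\alpha \gamma^{-1}) = \ord(g^\alpha) = \ord(g)$ when $\alpha$ is a unit mod $\ord(g)$). I would also spell out that $\sim$ as defined on conjugacy classes coincides with the $\Gamma_m$-orbit relation on elements: an orbit of conjugacy classes under powering-by-$T$ unpacks to an orbit of elements under the combined conjugation-and-powering action, because conjugacy classes are the $G$-orbits under conjugation alone. Everything else — that $S_{g,\alpha}$ only depends on the residue of $\alpha$ mod $\ord(g)$, that Burnside applies to the finite group $\Gamma_m$ — is routine. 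I do not expect any real obstacle beyond making these identifications precise.
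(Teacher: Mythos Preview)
Your proof is correct. The paper takes a slightly different route: instead of applying Burnside's lemma to the action of $G \times T(g,k)$ on elements, it first observes that $|S_{g,\alpha}|$ is either $0$ or $|G|/|\mathrm{Cl}(g)|$ (being a coset of the centralizer when nonempty), thereby rewriting the inner sum as $|\{\alpha \in T(g,k) : S_{g,\alpha} \neq \varnothing\}|/|\mathrm{Cl}(g)|$, and then applies the orbit--stabilizer theorem to the $T(g,k)$-action on the set of conjugacy classes $\{\mathrm{Cl}(g^\alpha)\}$ to identify this quantity with $1/|[g]|$; summing $1/|[g]|$ over $g$ then counts the $\sim$-classes. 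Your direct Burnside argument on the product group is more streamlined and bypasses the centralizer-coset observation, while the paper's version has the advantage of exhibiting the clean per-element identity $\tfrac{|\{\alpha : S_{g,\alpha} \neq \varnothing\}|}{[k(\zeta_{\ord(g)}):k]\cdot |\mathrm{Cl}(g)|} = \tfrac{1}{|[g]|}$. One minor slip in your write-up: the fixed-point condition $\gamma g^\alpha \gamma^{-1} = g$ is equivalent to $\gamma^{-1} \in S_{g,\alpha}$ rather than $\gamma \in S_{g,\alpha}$, but since you sum over all $\gamma \in G$ this does not affect the cardinality.
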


\begin{proof}
Observe that
\[
\sum_{g \in G - \{\text{id}\}} \frac{1}{[k(\zeta_{\ord(g)}) : k]} \sum_{\alpha \in T(g, k)} \frac{|S_{g, \alpha}|}{|G|} = \sum_{g \in G - \{\text{id}\}} \frac{|\{\alpha \in T(g, k) : S_{g, \alpha} \neq \varnothing\}|}{[k(\zeta_{\ord(g)}) : k] \cdot |\text{Cl}(g)|}.
\]
For elements $g, h \in G$, recall that $g \sim h$ if $\text{Cl}(g)$ is equivalent to $\text{Cl}(h)$ under the cyclotomic action and also recall that $b(G, k)$ is the number of equivalence classes of $\sim$. We claim that
\begin{align}
\label{eKeyOS}
\frac{|\{\alpha \in T(g, k) : S_{g, \alpha} \neq \varnothing\}|}{[k(\zeta_{\ord(g)}) : k] \cdot |\text{Cl}(g)|} = \frac{1}{|[g]|}.
\end{align}
But this follows from the Orbit-stabilizer theorem by letting $T(g, k)$ act on the set $X := \{\text{Cl}(g^\alpha) : \alpha \in T(g, k)\}$. Indeed, first observe that
\[
|[g]| = |X| \cdot |\text{Cl}(g)|.
\]
Since the action is transitive by construction, we get for every $x \in X$
\[
|X| = |\text{Orb}(x)| = \frac{[k(\zeta_{\ord(g)}) : k]}{|\text{Stab}(x)|}.
\]
Observing that $|\text{Stab}(x)| = |\text{Stab}(\text{Cl}(g))|$ is precisely $|\{\alpha \in T(g, k) : S_{g, \alpha} \neq \varnothing\}|$, equation (\ref{eKeyOS}) follows. Therefore the naive Malle constant is equal to
\[
b(G, k) = -1 + \sum_{g \in G - \{\text{id}\}} \frac{|\{\alpha \in T(g, k) : S_{g, \alpha} \neq \varnothing\}|}{[k(\zeta_{\ord(g)}) : k] \cdot |\text{Cl}(g)|} = -1 + |(G - \{\text{id}\})/\sim|,
\]
as desired.
\end{proof}

We now calculate equation (\ref{eMalleConstant}) in the special case of $G = G_n$ and $k = \Q$. Let $g \in G_n$. If $q_n(g) = 0$, then we have $\ord(g) = 3$ and
\begin{align}
\label{eCenterCount}
S_{g, \alpha} =
\begin{cases}
G_n & \text{if } \alpha \equiv 1 \bmod 3 \\
\varnothing & \text{otherwise.}
\end{cases}
\end{align}
Now suppose that $q_n(g) \neq 0$. If $h \in S_{g, \alpha}$, we have by definition
\[
h g h^{-1} = g^\alpha.
\]
Applying $q_n$ to the above expression and recalling Lemma \ref{lOrder}, we see that $S_{g, \alpha} = \varnothing$ unless $\alpha \equiv 1 \bmod \ord(g)$. Fixing $g$, we will now compute $S_{g, 1 \bmod \ord(g)}$, which is precisely the set of $h \in G_n$ commuting with $g$. 

Observe that the map $f_g: A_n \rightarrow (\Z/3\Z)^n$ given by lifting $a \in A$ to an element $h \in G_n$ and then computing
\[
h g h^{-1} g^{-1}
\]
is a well-defined homomorphism, i.e. does not depend on the choice of lift. 
% Indeed,
% \[
% h_1 h_2 g h_2^{-1} h_1^{-1} g^{-1} = h_1 h_2 g h_2^{-1} g^{-1} g h_1^{-1} g^{-1}
% \]
% and the element $h_2 g h_2^{-1} g^{-1}$ lies in the center of $G_n$. 
Writing out the multiplication rule for $G_n$ given by $\theta$ explicitly, we see that this homomorphism equals
\[
a \mapsto \left(\pi_0(a) \cdot \pi_i(q_n(g)) - \pi_0(q_n(g)) \cdot \pi_i(a)\right)_{1 \leq i \leq n}.
\]
We will now distinguish two cases. If $\pi_0(q_n(g)) \neq 0$, then the homomorphism $f_g$ is surjective. If instead $\pi_0(q_n(g)) = 0$, then the image of the homomorphism $f_g$ has dimension $0$ if $\pi_i(q_n(g)) = 0$ for all $i$ and dimension $1$ otherwise.

Note that the homomorphism $f_g$ depends only on $q_n(g)$. Furthermore, we have the key identity
\begin{align}
\label{eNCentralCount}
|S_{g, 1 \bmod \ord(g)}| = 3^n \cdot |\ker(f_g)| = 
\begin{cases}
3 \cdot 9^n & \text{if } \pi_0(q_n(g)) \neq 0 \\
27^n & \text{if } \pi_0(q_n(g)) = 0 \text{ and } \exists i : \pi_i(q_n(g)) \neq 0 \\
3 \cdot 27^n & \text{if } \pi_0(q_n(g)) = 0 \text{ and } \forall i : \pi_i(q_n(g)) = 0.
\end{cases}
\end{align}
Therefore we split the sum
\[
\sum_{g \in G_n - \{\text{id}\}} \frac{1}{\varphi(\ord(g))} \sum_{\alpha \in (\Z/\ord(g)\Z)^\ast} \frac{|S_{g, \alpha}|}{|G_n|} = \sum_{g \in G_n - \{\text{id}\}} \frac{1}{\varphi(\ord(g))} \cdot \frac{|S_{g, 1 \bmod \ord(g)}|}{|G_n|}
\]
in four pieces, namely the piece where $q_n(g) = 0$, the piece where $\pi_0(q_n(g)) \neq 0$, the piece where $\pi_0(q_n(g)) = 0$ and $\pi_i(q_n(g)) \neq 0$ for some $i$, and the piece where $\pi_0(q_n(g)) = 0$, $q_n(g) \neq 0$ and $\pi_i(q_n(g)) = 0$ for all $i$. The contribution from $q_n(g) = 0$ equals
\begin{align}
\label{eCount1}
\sum_{\substack{g \in G_n - \{\text{id}\} \\ q_n(g) = 0}} \frac{1}{\varphi(\ord(g))} \cdot \frac{|S_{g, 1 \bmod \ord(g)}|}{|G_n|} = \frac{3^n - 1}{2}
\end{align}
by equation (\ref{eCenterCount}). Using Lemma \ref{lOrder}, we see that the number of elements of $G_n$ with $\pi_0(q_n(g)) \neq 0$ with order $3$ is $2 \cdot 3^n \cdot 3^n$, while the number of elements of order $9$ is $2 \cdot 3^n \cdot (9^n - 3^n)$. The contribution from $\pi_0(q_n(g)) \neq 0$ becomes
\begin{align}
\label{eCount2}
\sum_{\substack{g \in G_n - \{\text{id}\} \\ \pi_0(q_n(g)) \neq 0}} \frac{1}{\varphi(\ord(g))} \cdot \frac{|S_{g, 1 \bmod \ord(g)}|}{|G_n|} &= \left(2 \cdot 3^n \cdot \frac{9^n - 3^n}{6} + 2 \cdot 3^n \cdot \frac{3^n}{2}\right) \cdot \frac{1}{3^n} \nonumber \\
&= 2 \cdot \frac{9^n - 3^n}{6} + 3^n
\end{align}
by equation (\ref{eNCentralCount}). Finally, we treat the contribution from $\pi_0(q_n(g)) = 0$ but $q_n(g) \neq 0$. Firstly, if $\pi_i(q_n(g)) = 0$ for all $i$, then $\ord(g) = 3$. There are $3^n \cdot (3^n - 1)$ such elements, and they contribute
\begin{align}
\label{eCount3}
\sum_{\substack{g \in G_n - \{\text{id}\} \\ q_n(g) \neq 0 \\ \pi_0(q_n(g)) = 0 \\ \forall i: \pi_i(q_n(g)) = 0}} \frac{1}{\varphi(\ord(g))} \cdot \frac{|S_{g, 1 \bmod \ord(g)}|}{|G_n|} = 3^n \cdot \frac{3^n - 1}{2}
\end{align}
to the total thanks to equation (\ref{eNCentralCount}). Now suppose that $\pi_i(q_n(g)) \neq 0$ for some $i$. Such $g$ have order $9$ and there are $3^n \cdot (9^n - 3^n)$ such $g$. This yields
\begin{align}
\label{eCount4}
\sum_{\substack{g \in G_n - \{\text{id}\} \\ q_n(g) \neq 0 \\ \pi_0(q_n(g)) = 0 \\ \exists i : \pi_i(q_n(g)) \neq 0}} \frac{1}{\varphi(\ord(g))} \cdot \frac{|S_{g, 1 \bmod \ord(g)}|}{|G_n|} = 3^n \cdot \frac{9^n - 3^n}{6} \cdot \frac{1}{3}
\end{align}
once more due to equation (\ref{eNCentralCount}). Adding up the contributions from (\ref{eCount1}), (\ref{eCount2}), (\ref{eCount3}) and (\ref{eCount4}) gives the following theorem.

\begin{theorem}
For all $n \geq 1$ there holds
\begin{align}
\label{eNaiveTotal}
b(G_n, \Q) + 1 = \frac{3^n - 1}{2} + 2 \cdot \frac{9^n - 3^n}{6} + 3^n + 3^n \cdot \frac{3^n - 1}{2} + 3^n \cdot \frac{9^n - 3^n}{6} \cdot \frac{1}{3}.
\end{align}
\end{theorem}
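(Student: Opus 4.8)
The plan is to evaluate the right-hand side of the formula (\ref{eMalleConstant}) for $b(G,k)$ in the case $G = G_n$, $k = \Q$, where it simplifies considerably because $T(g,\Q) = (\Z/\ord(g)\Z)^\ast$ and $[\Q(\zeta_{\ord(g)}):\Q] = \varphi(\ord(g))$. First I would observe that most of the inner sum over $\alpha$ vanishes: if $q_n(g) = 0$ then $\ord(g) = 3$ and $g$ is central, so $S_{g,\alpha} = G_n$ for $\alpha \equiv 1$ and $S_{g,\alpha} = \varnothing$ otherwise (equation (\ref{eCenterCount})); if $q_n(g) \neq 0$, then applying $q_n$ to the defining relation $hgh^{-1} = g^\alpha$ and invoking Lemma \ref{lOrder} forces $\alpha \equiv 1 \bmod \ord(g)$. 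Hence in every case $b(G_n,\Q) + 1 = \sum_{g \in G_n - \{\mathrm{id}\}} \frac{1}{\varphi(\ord(g))} \cdot \frac{|S_{g,1\bmod\ord(g)}|}{|G_n|}$, and $S_{g,1\bmod\ord(g)}$ is just the centralizer of $g$ in $G_n$.

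Next I would compute the size of this centralizer. The commutator map $f_g \colon A_n \to (\Z/3\Z)^n$, $a \mapsto hgh^{-1}g^{-1}$ for any lift $h$ of $a$, is a well-defined group homomorphism (independence of the lift is immediate since $(\Z/3\Z)^n$ is central and $q_n(hgh^{-1}g^{-1})=0$), and writing out the multiplication rule coming from $\theta$ gives the explicit formula $a \mapsto (\pi_0(a)\pi_i(q_n(g)) - \pi_0(q_n(g))\pi_i(a))_{1\le i\le n}$. From this I would read off that $f_g$ is surjective when $\pi_0(q_n(g)) \neq 0$, has one-dimensional image when $\pi_0(q_n(g)) = 0$ but $\pi_i(q_n(g)) \neq 0$ for some $i$, and is zero when $\pi_i(q_n(g)) = 0$ for all $0 \le i \le n$. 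Since an element of $G_n$ commutes with $g$ exactly when its image under $q_n$ lies in $\ker f_g$, the centralizer has size $3^n |\ker f_g|$; using $|A_n| = 3\cdot 9^n$ this yields the three cases of (\ref{eNCentralCount}).

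Finally I would partition $G_n - \{\mathrm{id}\}$ into the four classes appearing in (\ref{eCount1})–(\ref{eCount4}): namely $q_n(g) = 0$; $\pi_0(q_n(g)) \neq 0$; $\pi_0(q_n(g)) = 0$ with $q_n(g) \neq 0$ and all $\pi_i(q_n(g)) = 0$; and $\pi_0(q_n(g)) = 0$ with some $\pi_i(q_n(g)) \neq 0$. For each class I would count its elements and record the common order $\ord(g) = \ord(q_n(g))$ of its members via Lemma \ref{lOrder}, then substitute these counts together with $|G_n| = 3\cdot 27^n$ and the centralizer sizes from (\ref{eNCentralCount}) into the collapsed sum. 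Each of the four pieces evaluates to the corresponding displayed expression, and adding the four contributions gives (\ref{eNaiveTotal}). There is no serious obstacle here: the argument is essentially bookkeeping, and the only points requiring a little care are the element counts in each class (e.g. that there are $2\cdot 3^n(9^n - 3^n)$ elements with $\pi_0(q_n(g)) \neq 0$ of order $9$, using Lemma \ref{lOrder} to transfer the count from $A_n$) and the rank analysis of $f_g$, both of which are routine.
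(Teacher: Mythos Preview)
Your proposal is correct and follows essentially the same approach as the paper: collapse the inner sum in (\ref{eMalleConstant}) to the centralizer term via Lemma \ref{lOrder} and centrality, compute centralizer sizes through the commutator homomorphism $f_g$ and its rank, then partition $G_n - \{\mathrm{id}\}$ into the same four classes and add up the contributions (\ref{eCount1})--(\ref{eCount4}). The only differences are cosmetic.
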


Observe that the logarithmic exponent of Theorem \ref{th: counterexample} is strictly larger than $b(G_n, \Q)$ for all $n \geq 2$. This immediately gives Theorem \ref{tMain}.

\section{A modified Malle conjecture}
\label{sConjecture}
Let $G$ be a finite nilpotent group and suppose that we have a diagram
\[
\begin{tikzcd}[row sep = small]
G \arrow[two heads]{dr}{\phi} & \\
& H \\
(\Z/|G| \Z)^\ast \arrow[two heads]{ur}{r} &
\end{tikzcd}
\]
For the rest of this section, $\chi(\text{cyc}): G_{\Q} \rightarrow (\Z/|G| \Z)^\ast$ denotes the cyclotomic character. We now define
$$
\text{Epi}_{(H, \phi)}(G_{\Q}, G)
$$
to be the set of continuous surjective homomorphisms $\psi:G_{\Q} \twoheadrightarrow G$ satisfying the equations
$$
\phi \circ \psi = r \circ \chi(\text{cyc}), \quad \Q(\psi) \cap \Q(\zeta_{|G|}) = \Q(\phi \circ \psi),
$$
i.e. we are only considering those $\psi$ with a fixed wildly ramified cyclotomic subextension. In particular, we have the diagram
\[
\begin{tikzcd}[row sep = small]
& G \arrow[two heads]{dr}{\phi} & \\
G_\Q \arrow[two heads]{ur}{\psi} \arrow[two heads]{dr}{\chi(\text{cyc})} & & H \\
& (\Z/|G| \Z)^\ast \arrow[two heads]{ur}{r} &
\end{tikzcd}
\]
We denote by $G \times_H (\Z/|G| \Z)^\ast \subseteq G \times (\Z/|G| \Z)^\ast$ the subgroup consisting of pairs $(g, \alpha)$ satisfying
$$
\phi(g) = r(\alpha).
$$
The group $G \times (\Z/|G| \Z)^\ast$ acts on $\text{ker}(\phi) - \{\text{id}\}$ by sending $n$ to $gn^\alpha g^{-1}$. Restricting this action to $G \times_H (\Z/|G| \Z)^\ast$, we denote by
$$
b_{(H, \phi)}(G) := |(\text{ker}(\phi)-\{\text{id}\})/(G \times_{H} (\Z/|G| \Z)^\ast)|
$$
the size of the quotient space. We propose the following conjecture. 

\begin{conjecture} 
\label{conj1}
Let $G$ be a nilpotent group. For each $H, \phi$ as above, there exists a positive constant $c_{(H, \phi)}(G)$ such that
$$
|\{\psi \in \textup{Epi}_{(H, \phi)}(G_{\Q}, G): \mathfrak{f}(\psi) \leq X\}| \sim c_{(H, \phi)}(G) \cdot X \cdot \log(X)^{b_{(H, \phi)}(G) - 1}.
$$
\end{conjecture}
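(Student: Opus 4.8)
The plan is to reduce the general nilpotent case to the class-field-theoretic parametrization already developed for $G_n$ in Section \ref{sLifts}, replacing the specific group by an arbitrary nilpotent $G$ and the specific cyclotomic subextension by the one prescribed by $(H,\phi)$. First I would set up a parametrization analogous to Theorem \ref{tParGn}: fix once and for all, for every rational prime $p$, a generator $\sigma_p$ of tame inertia inside $G_\Q(|G|)$ (and suitable wild inertia generators at the primes dividing $|G|$), and attempt to encode a homomorphism $\psi \in \textup{Epi}_{(H,\phi)}(G_\Q,G)$ by the tuple $(v_g)_{g \in \ker(\phi)-\{\mathrm{id}\}}$ recording which primes $p$ satisfy $\psi(\sigma_p)=g$. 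The condition $\phi\circ\psi = r\circ\chi(\mathrm{cyc})$ pins down the abelianized/cyclotomic part of $\psi$, and the condition $\mathfrak{Q}(\psi)\cap\mathfrak{Q}(\zeta_{|G|}) = \mathfrak{Q}(\phi\circ\psi)$ is exactly what makes the relevant obstruction classes vanish, so that every admissible abelian datum lifts — this is the general-$G$ analogue of Lemma \ref{lemma: always lift}, and it will require the nilpotency hypothesis (lifting through the lower central series one central extension at a time, checking local triviality of each $H^2$ class via class field theory and the Shafarevich vanishing at $p\mid|G|$).

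Second, I would translate the counting problem into a sum of a non-negative multiplicative function $f$ over squarefree integers, exactly as in the proof of Theorem \ref{th: counterexample}. Each prime $p$ contributes a local factor counting the admissible values $g\in\ker(\phi)-\{\mathrm{id}\}$ of $\psi(\sigma_p)$, which is governed by the congruence class of $p$ modulo $|G|$ together with the constraint that the Frobenius-image of $\sigma_p$ must lie in a coset determined by $r\circ\chi(\mathrm{cyc})(\sigma_p)$ and must act compatibly, i.e. $\psi(\sigma_p)$ ranges over a set cut out by the $G\times_H(\Z/|G|\Z)^\ast$-structure. Averaging $f(p)$ over primes then yields precisely $b_{(H,\phi)}(G)$: grouping the admissible $g$ by their $T(g,\cdot)$-orbits under the restricted conjugation-plus-power action and invoking Orbit–Stabilizer (as in the proof of equation (\ref{eKeyOS})) shows that the average equals the number of orbits $b_{(H,\phi)}(G)$. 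Feeding this into Granville–Koukoulopoulos \cite[Theorem 1]{GK} gives $\sum_{m\le X}f(m) \sim c\,X(\log X)^{b_{(H,\phi)}(G)-1}$ with $c$ a conditionally convergent Euler product times $1/\Gamma(b_{(H,\phi)}(G))$, and multiplying by the (finite) number of ways to distribute the ramification among elements $g$ with $\phi(q(g))\neq\mathrm{id}$, together with the inclusion–exclusion passage from $\textup{Hom}$ to $\textup{Epi}$ already used in Theorem \ref{th: counterexample}, yields the stated asymptotic.

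The main obstacle is the general lifting lemma, i.e. proving that for nilpotent $G$ every abelian datum with the prescribed cyclotomic subextension actually lifts to an epimorphism onto $G$, uniformly in the ramification data. For $G_n$ this worked because the single obstruction $\theta_{i,\phi}$ was a product $\pi_0(\phi(\sigma))\cdot\pi_i(\phi(\tau))$ whose first factor vanished locally at every ramified prime by the split-completely-in-$\mathfrak{Q}(\zeta_9+\zeta_9^{-1})$ argument; in general one must climb the lower central series, and at each stage the obstruction class in $H^2(G_\Q,Z)$ (with $Z$ the relevant central subquotient) need not be a clean bilinear expression, so one must argue that the condition $\mathfrak{Q}(\psi)\cap\mathfrak{Q}(\zeta_{|G|})=\mathfrak{Q}(\phi\circ\psi)$ forces local triviality at each ramified place. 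One expects this to follow from the fact that, away from $|G|$, the local extension is tamely ramified and its inertia image is cyclic, so the restriction of any such central class to $G_{\Q_v}$ is pulled back from a procyclic group and hence killed; the genuinely delicate points are the places dividing $|G|$ (handled by Shafarevich-type vanishing of $H^2$ of local Galois groups for $p$-extensions) and the archimedean place (trivial in odd order, needing care for even order, which is exactly why Conjecture \ref{conj2} carries a rational correction factor for $|G|$ even). A secondary subtlety is verifying that the orbit count coming from the prime-average of $f$ matches $b_{(H,\phi)}(G)$ on the nose rather than up to an off-by-one or a spurious factor from the fixed cyclotomic part; this is a bookkeeping analogue of the Orbit–Stabilizer computation in Section \ref{sConjugacy} and should go through, but it is where one must be most careful about the interplay between the power action $n\mapsto n^\alpha$ and the conjugation action.
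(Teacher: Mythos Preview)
The statement you are addressing is a \emph{conjecture}; the paper does not prove it and offers no proof to compare against. What the paper does prove is the very special case $G=G_n$ with the specific cyclotomic datum of Remark~\ref{remark 1}(c), via Lemma~\ref{lemma: always lift} and Theorem~\ref{th: counterexample}. Your proposal is therefore not a proof but a sketch of a possible attack on an open problem, and it contains a genuine gap at precisely the point you flag as ``the main obstacle''.

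Specifically, your proposed mechanism for the general lifting lemma is incorrect. You write that at a tame ramified place $v$ the restriction of the obstruction class ``is pulled back from a procyclic group and hence killed''. But $G_{\Q_v}^{\mathrm{tame}}$ is not procyclic: it is a semidirect product of tame inertia by Frobenius, and $H^2$ of such a group with $\Z/p\Z$ coefficients is typically nonzero (indeed this is where local invariants live). In the paper's Lemma~\ref{lemma: always lift}, the vanishing at a ramified tame $v$ is obtained by an \emph{arithmetic} argument, not a cohomological-dimension one: the fact that $\pi_i\circ\phi$ lifts to $\Z/9\Z$ forces $v\equiv 1\bmod 9$, hence $v$ splits completely in $\Q(\zeta_9+\zeta_9^{-1})$, so the cochain $\pi_0(\phi(\sigma))\cdot\pi_i(\phi(\tau))$ is literally the zero cochain on $G_{\Q_v}$. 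This entanglement between the ramification constraint and the cyclotomic subfield is the entire content of the lemma and does not survive passage to an arbitrary nilpotent $G$ with an arbitrary central series: for a general step $1\to Z\to G'\to G''\to 1$ the obstruction in $H^2(G_\Q,Z)$ need not vanish locally at every ramified place, and there is no reason to expect that the condition $\Q(\psi)\cap\Q(\zeta_{|G|})=\Q(\phi\circ\psi)$ forces it to. The authors themselves only venture (in the paragraph following Theorem~\ref{tGn}) that the conjecture might be accessible when $\ker(\phi)$ is \emph{abelian}, via \cite{AD} or \cite{DY}, and even then ``with substantial work''; for general nilpotent $G$ no route is proposed.
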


In the following remark we compare this conjecture with the naive adaptation of Malle's conjecture and with Theorem \ref{th: counterexample}. 

\begin{remark} 
\label{remark 1}
\begin{enumerate}
\item[(a)] We can recover Malle's original exponent as follows. If one takes $H = \{\textup{id}\}$, one trivially has that $b_{(H, \phi)}(G) = b(G)$. In particular, Conjecture \ref{conj1} predicts that one can rescue Malle's conjecture in case one considers the family of extensions that are linearly disjoint from $\Q(\zeta_{|G|})$.

\item[(b)] We have restricted ourselves to maps $r: (\Z/|G| \Z)^\ast \twoheadrightarrow H$, but in principle one could consider maps $r: (\Z/|G|^j \Z)^\ast \twoheadrightarrow H$ for every $j \geq 2$ as well. Using that powering with elements $\alpha \equiv 1 \bmod |G|$ is the identity map on $G$, one can check that this does not lead to higher logarithmic exponents than the ones in our conjecture.

\item[(c)] We can recover the exponent in Theorem \ref{th: counterexample} as follows. Let $H$ be the order $3$ quotient of $(\Z/9\Z)^\ast$. Fix an identification $i: \Z/3\Z \rightarrow H$ and let $\phi := i \circ \pi_0 \circ q_n$. It is easy to verify that $\alpha$ in Theorem \ref{th: counterexample} equals exactly $b_{(H, \phi)}(G_n)$. 
\end{enumerate}
\end{remark}

We remark that one may reinterpret the exponents $b_{(H, \phi)}(G)$ as an adaptation, to the product of ramified primes, of T\"urkelli's modification of Malle's conjecture \cite{Turkelli}. Indeed, observe that the fibered product $G \times_H (\Z/|G| \Z)^\ast$ certainly contains $\text{ker}(\phi) \times \{1\}$. Hence the $G \times_H (\Z/|G| \Z)^\ast$-equivalence relation is a further equivalence relation on $\text{ker}(\phi)$-conjugacy classes in $\text{ker}(\phi)$. This further equivalence relation is obtained by acting on a $\text{ker}(\phi)$-conjugacy class via a pair $(g, \alpha)$ with $\phi(g) = r(\alpha)$. A moment's reflection shows that this comes down to the twisted $(\Z/|G| \Z)^\ast$-action on the set of $\text{ker}(\phi)$-conjugacy classes of $\text{ker}(\phi)$ given in \cite[page 198]{Turkelli}. 

\begin{remark}
A nilpotent group $G$ is always the product of its $p$-Sylow subgroups $G_p$. So the example in Theorem \ref{th: counterexample} might give the misleading impression that $b_{(H, \phi)}(G) > b(G, \Q)$ can only occur by fixing at $G_p$ some character ramified at $p$. However, choosing the central extension
$$
0 \rightarrow (\Z/2\Z)^n \rightarrow G_n \rightarrow (\Z/2\Z)^{n + 1} \times (\Z/3\Z)^n \rightarrow 0,
$$
given by the cocycles 
$$
\theta_i(\sigma, \tau) := \pi_0(\sigma) \cdot \pi_i(\tau)
$$
for $1 \leq i \leq n$, also leads to examples. Here $\pi_i$ denotes the projection map on the $i$-th copy of $\Z/2\Z$. Indeed, one may take $H := \Z/2\Z, \phi:=\pi_0$ and $r$ such that $\Q(r \circ \chi(\textup{cyc})) = \Q(\sqrt{-3})$. Then we have
$$
b_{(H, \phi)}(G_n) = \frac{12^n}{2} + O(6^n),
$$
while 
$$
b(G_n, \Q) = \frac{12^n}{4} + O(6^n),
$$
hence giving an example for $n$ sufficiently large. We leave the details of this alternative example to the interested reader. 
\end{remark}

Finally, we adapt the so-called Malle--Bhargava heuristic principle \cite{BhargavaMass} within the family $\text{Epi}_{(H, \phi)}(G_{\Q}, G)$, in order to specify the leading constant $c_{(H, \phi)}(G)$ in case $|G|$ is \emph{odd}. To this end, for a prime number $p$ and for $G, H, \phi$ as above, we denote by
$$
\text{Hom}_{(H, \phi)}(G_{\Q_p},G)
$$
the set of homomorphisms $\psi: G_{\Q_p} \rightarrow G$ such that $\phi \circ \psi = r \circ \chi(\text{cyc}) \circ i_p^\ast$. The Malle--Bhargava principle states that if one writes the following Euler product
$$
F(s) := \prod_p \left(\frac{1}{|\text{ker}(\phi)|} \cdot \sum_{\psi \in \text{Hom}_{(H, \phi)}(G_{\Q_p}, G)} \mathfrak{f}(\psi)^{-s}\right),
$$
as a Dirichlet series
$$
F(s) := \sum_{n \geq 1} \frac{f(n)}{n^s},
$$
then one expects the asymptotic
$$ 
\sum_{n \leq X} f(n) \sim |\{\psi \in \text{Epi}_{(H, \phi)}(G_{\Q}, G): \mathfrak{f}(\psi) \leq X\}|. 
$$
With this principle in mind, let us now compute the left hand side. For $g \in G$ and $\alpha \in (\Z/|G| \Z)^\ast$, we define
$$
S_{(H, \phi)}(g, \alpha) := \{h \in G : hgh^{-1} = g^{\alpha} \text{ and } \phi(h) = r(\alpha)\}.
$$
For a profinite group $\mathcal{G}$, we denote by $\mathcal{G}(p)$ the pro-$p$ completion of $\mathcal{G}$. Likewise, for a continuous homomorphism $\varphi: \mathcal{G}_1 \rightarrow \mathcal{G}_2$ of profinite groups, we denote by $\varphi(p)$ the induced map between pro-$p$ completions. We define $\mathcal{G}(\text{non-}p)$ to be the product of the pro-$q$ completions of $\mathcal{G}$ as $q$ runs over prime divisors of $|G|$ not equal to $p$. Given a continuous homomorphism $\varphi: \mathcal{G}_1 \rightarrow \mathcal{G}_2$, there is a natural induced homomorphism $\varphi(\text{non-}p)$. Write $f: H \rightarrow H(p)$ and $g: H \rightarrow H(\text{non-}p)$ for the natural surjective maps so that the product map $(f, g)$ is an isomorphism owing to the fact that $H$ is nilpotent.

We fix for each prime number $p$ a generator $\tau_p$ of the image of the inertia subgroup of $G_{\Q_p}$ in the quotient $g \circ r \circ \chi(\text{cyc}) \circ i_p^\ast$: this is a cyclic group because tame inertia is cyclic. 

\begin{proposition}
\label{pBhargava}
Notation as immediately above, we have that
$$
\sum_{n \leq X} f(n) \sim c_{(H, \phi)}(G) \cdot X \cdot \textup{log}(X)^{b_{(H, \phi)}(G) - 1},
$$
where $c_{(H, \phi)}(G)$ is the conditionally convergent product 
$$
c_{(H, \phi)}(G) := \frac{1}{\Gamma(b_{(H, \phi)}(G))} \times \alpha_1 \times \alpha_2 \times \alpha_3,
$$
where
\begin{align*}
\alpha_1 &:= 
\prod_{p \mid \mathfrak{f}(r \circ \chi(\textup{cyc}))} \left(\frac{|\textup{ker}(\phi(p))|}{p} \right) \cdot \left(\frac{\sum_{g \in \phi(\textup{non-}p)^{-1}(\tau_p)} |S_{(H(\textup{non-}p), \phi(\textup{non-}p))}(g,p)|}{|\textup{ker}(\phi(\textup{non-}p))|}\right) \left(1-\frac{1}{p}\right)^{b_{(H, \phi)}(G)} \\
\alpha_2 &:= \hspace{-0.4cm}
\prod_{\substack{p \mid |G| \\ p \nmid \mathfrak{f}(r\circ\chi(\textup{cyc}))}} \hspace{-0.15cm} \left(1 + \frac{|\textup{ker}(\phi(p))| \left(\frac{\sum_{g \in \textup{ker}(\phi(\textup{non-}p))} |S_{(H(\textup{non-}p), \phi(\textup{non-}p))}(g, p)|}{|\textup{ker}(\phi(\textup{non-}p))|}\right)-1}{p}\right) \left(1 - \frac{1}{p}\right)^{b_{(H, \phi)}(G)} \\
\alpha_3 &:= \prod_{p \nmid |G|} \left(1 + \frac{\sum_{g \in \textup{ker}(\phi) - \{\textup{id}\}} |S_{(H, \phi)}(g, p)|}{|\textup{ker}(\phi)| \cdot p}\right)\left(1 - \frac{1}{p}\right)^{b_{(H, \phi)}(G)}. 
\end{align*}
\end{proposition}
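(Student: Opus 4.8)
The plan is to compute the Euler product $F(s)$ factor-by-factor, identify the abscissa of convergence and the order of the pole at $s=1$, and then extract the main term via a standard Tauberian/Selberg--Delange argument, exactly as the Granville--Koukoulopoulos input was used for Theorem \ref{th: counterexample}. First I would analyze the local factor at a fixed prime $p$. A homomorphism $\psi \colon G_{\Q_p} \to G$ with $\phi \circ \psi = r \circ \chi(\textup{cyc}) \circ i_p^\ast$ decomposes, via the tame-at-$p$ / away-from-$p$ splitting of both $G_{\Q_p}$ and $G$, and the key point is that $\mathfrak{f}(\psi) \in \{1, p\}$: it is $p$ exactly when $\psi$ is ramified. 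I would count separately the unramified $\psi$ (those factoring through the unramified quotient, parametrized by a Frobenius lift that must conjugate compatibly — giving the ``$1$'' in each factor, resp.\ the contribution $|\textup{ker}(\phi(p))|$ of the wildly ramified primes) and the ramified $\psi$ (an inertia generator is sent to some $g \in \textup{ker}(\phi)$ with the prescribed image under $\phi$, i.e.\ matching $\tau_p$, and a Frobenius lift must lie in $S_{(H,\phi)}(g,p)$, using that tame inertia satisfies $FgF^{-1}=g^p$). Dividing by $|\textup{ker}(\phi)|$ as prescribed by the Malle--Bhargava normalization, this yields precisely the three shapes of local factor $1 + (\cdots)/p + O(p^{-2})$ appearing in $\alpha_1, \alpha_2, \alpha_3$, with the primes dividing $\mathfrak{f}(r\circ\chi(\textup{cyc}))$ being wildly ramified (forced ramification, hence the $|\textup{ker}(\phi(p))|/p$ prefactor with no ``$1+$''), the remaining primes dividing $|G|$ contributing $\alpha_2$, and the primes not dividing $|G|$ contributing $\alpha_3$.

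Next I would check that the average of the prime-indexed coefficient is exactly $b_{(H,\phi)}(G)$. Concretely, for $p \nmid |G|$ large in a fixed Frobenius class, the number of ramified $\psi$ up to the normalization is $\sum_{g} |S_{(H,\phi)}(g,p)| / |\textup{ker}(\phi)|$ where $g$ ranges over $\textup{ker}(\phi)-\{\textup{id}\}$ with $\phi(g)$ constrained by $p \bmod |G|$; averaging over residues $p \bmod |G|$ in $(\Z/|G|\Z)^\ast$ and invoking Chebotarev, this average equals $|(\textup{ker}(\phi)-\{\textup{id}\})/(G\times_H(\Z/|G|\Z)^\ast)|$ by an orbit-counting argument identical in spirit to the proof of equation (\ref{eKeyOS}) — the $(g,\alpha)$ pair realizing the action on a $\textup{ker}(\phi)$-conjugacy class corresponds to choosing a ramified Frobenius--inertia pair. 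Hence $F(s)$ has the form $\zeta(s)^{b_{(H,\phi)}(G)} \cdot G(s)$ with $G(s)$ holomorphic and nonzero in a neighbourhood of $\Re(s) \geq 1$, so $F(s)$ has a pole of order $b_{(H,\phi)}(G)$ at $s=1$.

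Then I would feed this into a Selberg--Delange / Tauberian theorem (or directly Granville--Koukoulopoulos \cite{GK} as in Theorem \ref{th: counterexample}, since the coefficients $f(n)$ are nonnegative and essentially supported on products of ``good'' primes) to obtain
$$
\sum_{n \leq X} f(n) \sim \frac{C}{\Gamma(b_{(H,\phi)}(G))} \cdot X \cdot (\log X)^{b_{(H,\phi)}(G)-1},
$$
where $C = \lim_{s \to 1^+}(s-1)^{b_{(H,\phi)}(G)} F(s)$. The final bookkeeping step is to verify that $C$ factors as $\alpha_1 \cdot \alpha_2 \cdot \alpha_3$: one writes $C = \prod_p \bigl(\text{local factor at }p\bigr)\bigl(1-\tfrac1p\bigr)^{b_{(H,\phi)}(G)}$, regroups the primes into the three sets above, and matches each group with the displayed expressions — the regularizing factors $(1-1/p)^{b_{(H,\phi)}(G)}$ ensure each sub-product converges conditionally.

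The main obstacle I expect is the careful analysis of the local factor at the wildly ramified primes $p \mid \mathfrak{f}(r\circ\chi(\textup{cyc}))$ and, more generally, at primes dividing $|G|$: here the tame/wild splitting of $G_{\Q_p}$ is more delicate, one must correctly account for the forced wild ramification coming from $r \circ \chi(\textup{cyc})$, and one has to verify that the pro-$p$ part of $\psi$ contributes the factor $|\textup{ker}(\phi(p))|$ (counting extensions of the wild part, using that $G$ is nilpotent so $\textup{ker}(\phi)$ splits as $\textup{ker}(\phi(p)) \times \textup{ker}(\phi(\textup{non-}p))$) while only the non-$p$ part affects $\mathfrak{f}(\psi)$ in the relevant way; disentangling which homomorphisms are counted, and with what multiplicity after the $1/|\textup{ker}(\phi)|$ normalization, is where the combinatorics is most error-prone. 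The orbit-counting identity giving that the coefficient average is exactly $b_{(H,\phi)}(G)$ is conceptually the heart of the matter but, given equation (\ref{eKeyOS}), should go through with only notational changes.
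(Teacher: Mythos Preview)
Your proposal is correct and follows essentially the same route as the paper: compute each local factor of $F(s)$ by splitting into the $p$-part and non-$p$-part (using nilpotency), show the prime average equals $b_{(H,\phi)}(G)$ by an orbit-counting argument, and apply \cite{GK}. Two small remarks: the paper obtains the prime average via a direct application of Burnside's lemma to the action of $G\times_H(\Z/|G|\Z)^\ast$ on $\ker(\phi)-\{\mathrm{id}\}$ rather than by adapting (\ref{eKeyOS}); and for the wild-prime factor you flagged as the main obstacle, the key input is Shafarevich's theorem that $G_{\Q_p}(p)$ is free pro-$p$ on two generators, which gives $T_p=|\ker(\phi(p))|^2$ immediately and, after dividing by $|\ker(\phi)|=|\ker(\phi(p))|\cdot|\ker(\phi(\text{non-}p))|$, produces the $|\ker(\phi(p))|/p$ in $\alpha_1$.
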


\begin{proof}
Note that $f$ is supported on squarefree integers and multiplicative away from primes dividing $|G|$. The proposition will ultimately follow from \cite[Theorem 1]{GK}, with the exponent of $\text{log}(X)$ being the average of $f$ on primes. Let us start by showing that this equals $b_{(H, \phi)}(G)$. We compute
$$
\frac{1}{|\text{ker}(\phi)| \cdot \varphi(|G|)} \cdot \sum_{\substack{g \in \textup{ker}(\phi) - \{\text{id}\} \\ \alpha \in (\Z/|G| \Z)^\ast}} |S_{(H, \phi)}(g, \alpha)|.
$$
Observe that $|\text{ker}(\phi)| \cdot \varphi(|G|) = |G \times_H (\Z/|G| \Z)^\ast|$. We can therefore rewrite the sum as
$$
\frac{1}{|G \times_H (\Z/|G| \Z)^\ast|} \cdot \sum_{(h, \alpha) \in G \times_H (\Z/|G| \Z)^\ast} |\{g \in \ker(\phi) - \{\text{id}\} : hgh^{-1} = g^\alpha\}| = b_{(H, \phi)}(G)
$$
by Burnside's lemma. 

It remains to examine the local factors of the leading constant. In \cite{GK}, the authors do so by rewriting $F(s)$ as
$$
F(s) = \frac{F(s)}{\zeta_{\Q}(s)^{b_{(H, \phi)}(G)}} \cdot \zeta_{\Q}(s)^{b_{(H, \phi)}(G)}, 
$$
in order to obtain the leading coefficient as a conditionally convergent Euler product. This is the reason for the occurrence of the term $(1 - \frac{1}{p})^{b_{(H, \phi)}(G)}$ in our formulas. We now wish to explain the remaining contributors. 

\emph{The constant $\alpha_1$:} Suppose that $p \mid \mathfrak{f}(r \circ \chi(\text{cyc}))$. Then the local contribution is precisely
$$
\frac{1}{p \cdot |\text{ker}(\phi)|} \cdot |\text{Hom}_{(H, \phi)}(G_{\Q_p}, G)|.
$$
Recalling that these are nilpotent groups, we see that we can split the count in the numerator Sylow by Sylow. Therefore we have that 
$$
|\text{Hom}_{(H, \phi)}(G_{\Q_p}, G)| = T_p \times T_{\text{non-}p},
$$
where $T_p$ equals the number of continuous homomorphisms $\psi: G_{\Q_p} \rightarrow G(p)$ such that
$$
\phi(p) \circ \psi = f \circ r \circ \chi(\text{cyc}) \circ i_p^\ast,
$$
while $T_{\text{non-}p}$ equals the number of continuous homomorphisms $\psi: G_{\Q_p} \rightarrow G(\text{non-}p)$ such that
$$
\phi(\text{non-}p) \circ \psi = g \circ r \circ \chi(\text{cyc}) \circ i_p^\ast.
$$
Note that $\psi: G_{\Q_p} \rightarrow G(p)$ factors through $G_{\Q_p}(p)$ and recall that $G_{\Q_p}(p)$ is isomorphic to a free pro-$p$ group on $2$ generators. It follows that $T_p$ equals the number of pairs of elements in $G(p)$ having prescribed value of $\phi(p)$. Therefore
$$
T_p = |\text{ker}(\phi(p))|^2.
$$
Note that any map $\psi: G_{\Q_p} \rightarrow G(\text{non-}p)$ must factor through $G_{\Q_p}^{\text{tame}}$. Recall that 
$$
G_{\Q_p}^{\text{tame}} \simeq_{\text{top.gr.}} \left(\prod_{\ell \neq p} \Z_\ell\right) \rtimes \hat{\Z},
$$
where the topological generator $1$ of the group $\hat{\Z}$ acts by multiplication by $p$ on $\prod_{\ell \neq p} \Z_\ell$. Both groups in this direct product are pro-cyclic. Hence the cardinality
$$ 
T_{\text{non-}p}
$$
equals the number of possible choices for two fixed generators. Once we prescribe that a generator of $\prod_{\ell \neq p} \Z_{\ell}$ goes to an element $g$ of $\phi(\text{non-}p)^{-1}(\tau_p) \in G(\text{non-}p)$, we have that the generator $1$ of $\hat{\Z}$ has to be sent in $S_{(H(\text{non-}p), \phi(\text{non-}p))}(g, p) \subseteq G(\text{non-}p)$. And conversely any choice of such a pair gives rise to a valid homomorphism. This proves that
$$ 
T_{\text{non-}p} = \left(\sum_{g \in \phi(\text{non-}p)^{-1}(\tau_p)} |S_{(H(\textup{non-}p), \phi(\text{non-}p))}(g,p)|\right), 
$$
which gives us the desired conclusion on $\alpha_1$.

\emph{The constant $\alpha_2$:} Suppose that $p \mid |G|$ and $p \nmid \mathfrak{f}(r \circ \chi)$. Then splitting the local factor
$$
\frac{1}{|\text{ker}(\phi)|} \cdot \sum_{\psi \in \text{Hom}_{(H, \phi)}(G_{\Q_p}, G)} \mathfrak{f}(\psi)^{-1},
$$
into unramified and ramified $\psi$, we get precisely
$$
1 + \frac{|\{\psi \text{ ramified and } \psi \in \text{Hom}_{(H, \phi)}(G_{\Q_p}, G)\}|}{p \cdot |\ker(\phi)|}.
$$
This is because there are precisely $|\text{ker}(\phi)|$ unramified ones. Indeed, the Galois group of the maximal unramified extension of $\Q_p$ is topologically free on one generator and therefore the unramified elements of $\text{Hom}_{(H, \phi)}(G_{\Q_p}, G)$ correspond to the choices of an element of $G$ with prescribed image under $\phi$. 

Now we can use again the count of unramified classes in $\text{Hom}_{(H, \phi)}(G_{\Q_p}, G)$ to obtain that
$$
|\{\psi \text{ ramified and } \psi \in \text{Hom}_{(H, \phi)}(G_{\Q_p}, G)\}| = |\text{Hom}_{(H, \phi)}(G_{\Q_p}, G)| - |\ker(\phi)|.
$$
But we have computed in the evaluation of $\alpha_1$ that
$$
|\text{Hom}_{(H, \phi)}(G_{\Q_p}, G)| = |\textup{ker}(\phi(p))|^2 \left(\sum_{g \in \phi(\textup{non-}p)^{-1}(\tau_p)} |S_{(H(\textup{non-}p), \phi(\textup{non-}p))}(g, p)|\right).
$$
Since $p \nmid \mathfrak{f}(r \circ \chi(\text{cyc}))$, we have $r \circ \chi(\text{cyc}) \circ i_p^\ast(I_p) = \{1\}$ and thus $\phi(\textup{non-}p)^{-1}(\tau_p) = \ker(\phi(\textup{non-}p))$. This gives the desired formula. 

\emph{The constant $\alpha_3$:} Since $p$ does not divide $|G|$, we now have that all the homomorphisms will be tame. In particular, this implies that the image of a generator of tame inertia has to be in $\ker(\phi)$. The total contribution from unramified homomorphisms is again $1$, as already articulated above. The one from ramified ones comes precisely in the same way we have explained in the computation of $\alpha_1$. 
\end{proof}

With the above in mind, we are ready to make the following conjecture.

\begin{conjecture} 
\label{conj2}
Suppose $G$ is odd and nilpotent. Then Conjecture \ref{conj1} holds with 
$$
c_{(H, \phi)}(G) := \frac{1}{\Gamma(b_{(H, \phi)}(G))} \times \alpha_1 \times \alpha_2 \times \alpha_3,
$$
where $\alpha_i$ are as in Proposition \ref{pBhargava}.
%$$
%c_{(H, \phi)}(G):=\frac{1}{\Gamma(b_{(H, \phi)}(G))}\times
%$$
%$$\prod_{p| \mathfrak{f}(r \circ \chi(\textup{cyc}))}\left(\frac{\#\textup{ker}(\phi_p)}{p} \right) \cdot \left(\frac{\sum_{g \in \textup{ker}(\phi(\textup{non-}p))}\#S_{(H(\textup{non-}p),\phi(\textup{non-}p))}(g,p)}{\#\textup{ker}(\phi(\textup{non-}p))}\right) \left(1-\frac{1}{p}\right)^{b_{(H, \phi)}(G)}$$
%$$
%\times \prod_{p|\#G, p \not| \mathfrak{f}(r\circ\chi(\textup{cyc}))} \left(1+\frac{\textup{ker}(\phi_p) \cdot \left(1+\frac{\sum_{\textup{id} \neq g \in \textup{ker}(\phi(\textup{non-}p))}\#S_{(H(\textup{non-}p),\phi(\textup{non-}p))}(g,p)}{\#\textup{ker}(\phi(\textup{non-}p))}\right)-1}{p}\right) \cdot \left(1-\frac{1}{p}\right)^{b_{(H, \phi)}(G)}$$
%$$\times \prod_{p\not|\#G}\left(1+\frac{\sum_{g \in \textup{ker}(\phi)-\{\textup{id}\}}\#S_{(H, \phi)}(g,p)}{\#\textup{ker}(\phi) \cdot p}\right)\left(1-\frac{1}{p}\right)^{b_{(H, \phi)}(G)}. 
%$$
\end{conjecture}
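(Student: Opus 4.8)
The plan is to reduce the count of $\psi \in \textup{Epi}_{(H,\phi)}(G_{\Q}, G)$ to a sum $\sum_{n \leq X} f(n)$ of a nonnegative multiplicative function and then invoke Granville--Koukoulopoulos \cite[Theorem 1]{GK}, exactly along the lines of Theorem \ref{th: counterexample}. The first step is to extend the Koymans--Pagano parametrization of Theorem \ref{tParGn} to this family. Filter $G$ by a central series $G = G^{(0)} \supset G^{(1)} \supset \dots \supset G^{(m)} = \{\textup{id}\}$ with each $G^{(j)}/G^{(j+1)}$ central and elementary abelian, and build $\psi$ stage by stage: given $\psi_j : G_\Q \to G/G^{(j)}$ with the prescribed cyclotomic subextension, lifting to $\psi_{j+1} : G_\Q \to G/G^{(j+1)}$ is a central embedding problem with kernel $G^{(j)}/G^{(j+1)}$. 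Recording which element of $G$ each tame-inertia generator $\sigma_p$ is sent to, one parametrizes $\textup{Hom}_{(H,\phi)}(G_\Q, G)$ by tuples $(v_g)_g$ of pairwise coprime squarefree integers, indexed by the relevant conjugacy-type data, with constraints $p \mid v_g \Rightarrow p \equiv 1 \bmod \textup{ord}(g)$ at tame primes and a rigid contribution at primes dividing $|G|$, and with $\mathfrak{f}(\psi) = \prod_g v_g$. Passing from $\textup{Hom}$ to $\textup{Epi}$ is then pure inclusion--exclusion over the lattice of subgroups of $G$ that surject onto the image of $\chi(\textup{cyc})$, exactly as in Theorem \ref{th: counterexample}: every secondary sum has a strictly smaller power of $\log X$ (removing a free variable lowers the prime average), so the surviving main term is the $\textup{Hom}$-count.

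Applying \cite[Theorem 1]{GK} to $\sum_{n \leq X} f(n)$, where $f$ is the resulting nonnegative multiplicative function (supported on squarefrees, multiplicative away from $|G|$, with $f(p) = O_G(1)$), yields $\sum_{n \leq X} f(n) \sim \frac{1}{\Gamma(\kappa)} X (\log X)^{\kappa - 1} \prod_p \left(\sum_{j \geq 0} f(p^j) p^{-j}\right)(1 - 1/p)^{\kappa}$ with $\kappa$ the average of $f$ over primes. Two verifications then finish the matching with Proposition \ref{pBhargava}. First, $\kappa = b_{(H,\phi)}(G)$: this is precisely the Burnside-lemma computation already carried out in the proof of Proposition \ref{pBhargava}, since $f(p)/p$ averages to the number of $(g,\alpha)$-orbits on $\ker(\phi) - \{\textup{id}\}$ over $p$. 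Second, the Euler factor at each $p$ produced by the parametrization and inclusion--exclusion equals $\frac{1}{|\ker(\phi)|}\sum_{\psi \in \textup{Hom}_{(H,\phi)}(G_{\Q_p},G)} \mathfrak{f}(\psi)^{-1}$, which is exactly what is computed locally in Proposition \ref{pBhargava}; wild primes $p \mid |G|$ split into the $\alpha_1$ and $\alpha_2$ cases according to whether $p$ ramifies in the fixed cyclotomic subextension, using that $G_{\Q_p}(p)$ is free pro-$p$ on $2$ generators and $G_{\Q_p}^{\textup{tame}} \cong (\prod_{\ell \neq p}\Z_\ell) \rtimes \hat{\Z}$, while $p \nmid |G|$ gives $\alpha_3$. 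The conditional convergence of the product is built into \cite[Theorem 1]{GK}.

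The crux --- and the reason this remains a conjecture --- is controlling the $H^2$-obstruction to the successive central embedding problems for a \emph{general} odd nilpotent $G$, that is, when $\ker(\phi)$ is not abelian. When $\ker(\phi)$ is abelian there is a single embedding problem with abelian kernel, and one expects to invoke \cite[Theorem 1.1]{AD} or \cite[Theorem 1.3.3]{DY} (with real work to extract the exponent and leading constant), using crucially that for odd $|G|$ the Grunwald--Wang special case never occurs: the local--global principle for the embedding problem holds with no exception, the obstruction $\theta_{\psi_j} \in H^2(G_\Q, G^{(j)}/G^{(j+1)})$ vanishes iff it vanishes at every place, the solution set (when nonempty) is a torsor under the relevant $H^1$, and the subfamily with the prescribed wildly ramified cyclotomic subextension is governed by ray class characters --- exactly the Koymans--Pagano input of Theorem \ref{tParGn}. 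For non-abelian $\ker(\phi)$ one must iterate, and the genuine difficulty is that the stage-$(j+1)$ obstruction class depends on the choices made at stages $\leq j$: to see that the count still factors as an Euler product one needs an equidistribution statement showing that, as $p$ and the $(v_g)$ range over the parametrizing tuples, the local data at $p$ feeding the stage-$(j+1)$ problem is equidistributed, so that averaging the stage-$(j+1)$ local solution counts over the stage-$j$ data reproduces a product of local densities. Making this uniform in $X$, with error terms small enough to survive the inclusion--exclusion, is the main obstacle, and would require pushing the method beyond both \cite{KPMalle1} and the cited works \cite{AD, DY}. (The even case is excluded precisely because the Grunwald--Wang correction then enters, producing the rational factor alluded to after Conjecture \ref{conjIntro2}.)
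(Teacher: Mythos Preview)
The statement in question is a \emph{conjecture}, and the paper offers no proof of it. What the paper does provide is Proposition \ref{pBhargava}, which shows that the Malle--Bhargava heuristic formally predicts exactly the constant $c_{(H,\phi)}(G)$ and exponent $b_{(H,\phi)}(G)$ appearing in Conjecture \ref{conj2}; the conjecture is then simply the assertion that this heuristic is correct for odd nilpotent $G$. No attempt is made to turn this into an unconditional theorem.

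Your proposal is therefore not really in competition with a proof in the paper, since there is none. You are honest about this yourself: you correctly flag that the crux is controlling the $H^2$-obstructions for the iterated central embedding problems when $\ker(\phi)$ is non-abelian, and that this is precisely why the statement remains conjectural. Your outline of the strategy --- parametrize \`a la Theorem \ref{tParGn}, feed the resulting multiplicative function into \cite[Theorem 1]{GK}, and match the Euler factors with those of Proposition \ref{pBhargava} --- is the natural one and is consistent with how the paper motivates the conjecture. But it is important to be clear that the lifting step in your first paragraph is not established in general: Lemma \ref{lemma: always lift} is specific to the groups $G_n$ and relies on an explicit vanishing of local invariants that has no analogue for arbitrary odd nilpotent $G$. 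So your write-up should be read as a heuristic road map rather than a proof, and the paper treats it the same way.
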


It is readily verified that Theorem \ref{th: counterexample} is a special case of Conjecture \ref{conj2}, with the choice of $G, H, \phi$ as explained in Remark \ref{remark 1}. The extra factor $2$ in Theorem \ref{th: counterexample} accounts for the fact that one may also choose another identification in Remark \ref{remark 1}. The factor $\frac{27^n}{3}$ is the local factor at $3$ and the constant $c_0$ therein is the product of the tame factors along with the factor $(\frac{2}{3})^{\alpha}$.

The conjecture can be extended also for a finite prescribed set of local conditions by modifying accordingly the local factors defining $F(s)$, namely summing $\mathfrak{f}(\psi)^{-s}$ only among the prescribed local homomorphisms $\psi$. In particular, if one runs only over tame extensions, one gets the simpler leading constant
$$
\prod_{p \mid |G|} \left(1 - \frac{1}{p}\right)^{b_{(H, \phi)}(G)} \times 
\prod_{p \nmid |G|} \left(1 + \frac{\sum_{g \in \textup{ker}(\phi) - \{\textup{id}\}} |S_{(H, \phi)}(g, p)|}{|\textup{ker}(\phi)| \cdot p}\right) \left(1 - \frac{1}{p}\right)^{b_{(H, \phi)}(G)}.
$$
We have excluded the groups of even cardinality from Conjecture \ref{conj2}, since one can prove that the same conjecture would fail already for the dihedral group on $8$ elements, and even among tamely ramified extensions. In this case the leading constant is likely to be an Euler product times a rational correction factor to account for quadratic reciprocity. If also wild extensions are considered, then even further modifications may be needed by Grunwald--Wang type of obstructions. These problems at $2$ corresponds to the unspecified constant $C(G)$ in \cite[Equation (8.6), page 17]{BhargavaMass}.

\end{document}